\newtheorem{thm}{Theorem}[section]
\newtheorem{prop}[thm]{Proposition}
\newtheorem{lem}[thm]{Lemma}
\newtheorem{cor}[thm]{Corollary}
\numberwithin{equation}{section}
\theoremstyle{definition}
\newtheorem{definition}[thm]{Definition}
\newtheorem{remark}[thm]{Remark}
\newtheorem{ex}[thm]{Example}
\DeclareSymbolFont{cyrletters}{OT2}{wncyr}{m}{n}
\DeclareMathSymbol{\Sha}{\mathalpha}{cyrletters}{"58}
\newcommand{\Db}{{\rm D}^{\rm b}}
\newcommand{\NS}{{\rm NS}}
\newcommand{\Pic}{{\rm Pic}}
\newcommand{\End}{{\rm End}}
\newcommand{\cal}{\mathcal}
\newcommand{\kc}{{\cal C}}
\newcommand{\ko}{{\cal O}}
\newcommand{\kp}{{\cal P}}
\newcommand{\ks}{{\cal S}}
\newcommand{\kt}{{\cal T}}
\newcommand{\kx}{{\cal X}}
\newcommand{\ZZ}{\mathbb{Z}}
\newcommand{\QQ}{\mathbb{Q}}
\newcommand{\RR}{\mathbb{R}}
\newcommand{\CC}{\mathbb{C}}
\newcommand{\HH}{\mathbb{H}}
\newcommand{\PP}{\mathbb{P}}
\renewcommand{\to}{\xymatrix@1@=15pt{\ar[r]&}}
\renewcommand{\rightarrow}{\xymatrix@1@=15pt{\ar[r]&}}
\renewcommand{\leftarrow}{\xymatrix@1@=15pt{&\ar[l]}}
\renewcommand{\mapsto}{\xymatrix@1@=15pt{\ar@{|->}[r]&}}
\renewcommand{\twoheadrightarrow}{\xymatrix@1@=18pt{\ar@{->>}[r]&}}
\renewcommand{\hookrightarrow}{\xymatrix@1@=15pt{\ar@{^(->}[r]&}}
\newcommand{\hook}{\xymatrix@1@=15pt{\ar@{^(->}[r]&}}
\newcommand{\congpf}{\xymatrix@L=0.6ex@1@=15pt{\ar[r]^-\sim&}}
\renewcommand{\cong}{\simeq}
\begin{document}

\title[]{Brilliant families of K3 surfaces: Twistor spaces, Brauer groups, and Noether--Lefschetz loci}
\author[D.\ Huybrechts]{D.\ Huybrechts}

\address{Mathematisches Institut and Hausdorff Center for Mathematics,
Universit{\"a}t Bonn, Endenicher Allee 60, 53115 Bonn, Germany}
\email{huybrech@math.uni-bonn.de}

\begin{abstract} \noindent
We describe the Hodge theory of brilliant families of K3 surfaces. 
Their characteristic feature is a close link between the Hodge structures
of any two fibres over points in the Noether--Lefschetz locus.
Twistor deformations, the analytic Tate--{\v{S}}afarevi{\v{c}} group, and certain
one-dimensional Shimura special cycles are covered by the theory. In this setting,
the Brauer group is viewed as the
Noether--Lefschetz locus of the Brauer family or as the specialization of the
Noether--Lefschetz loci in a family of approaching twistor spaces. 
Passing from one algebraic twistor fibre to another, which by construction is  a transcendental operation, is here viewed as first deforming along the  more algebraic Brauer family and then along a family of algebraic K3 surfaces.

 \vspace{-2mm}
\end{abstract}

\maketitle
{\let\thefootnote\relax\footnotetext{The author is supported by the 
ERC Synergy Grant  HyperK (Grant agreement No.\ 854361).}
\marginpar{}
}
This note is concerned with the Hodge theory of brilliant families of K3 surfaces. A connected smooth holomorphic family $\ks\to C$ of K3 surfaces is called a
\emph{brilliant deformation}
of the fibre $S=\ks_0$ if there exists a class $\ell\in H^{1,1}(S,\ZZ)$ such that
under parallel transport one has
$$H^{2,0}(\ks_t)\subset H^{2,0}(S)\oplus H^{0,2}(S)\oplus\CC \cdot \ell$$ with $\ell$ not contained in $H^{2,0}(\ks_t)\oplus H^{0,2}(\ks_t)$, see \S \ref{sec:brilliant}  and \S \ref{sec:K3Transl} for details.

The starting point is 
the Hodge theory of  twistor spaces studied in \cite{HuyCM}, but 
more algebraic constructions like Brauer and Tate--{\v{S}}afarevi{\v{c}} groups as
well as special cycles of K3 surfaces are  covered by the theory. The Hodge theoretic
approach adopted here unifies these rather different geometric notions
and explains their interactions.
We start out by describing three particular  examples of brilliant families
and then state the main results in the geometric context.

\subsection{}  Examples of brilliant families include the following well-known
constructions:
\smallskip

\noindent
(i) The \emph{Dwork pencil} $\ks\to{\mathbb A}^1\setminus\{t\mid t^4=1\}$ of
quartic surfaces defined by $\sum_{i=0}^3x_i^4-4t\prod_{i=0}^3 x_i$,
the special fibre $S=\ks_0$ of which is the Fermat quartic.
\smallskip

\noindent
(ii) The \emph{Brauer} (or \emph{Tate--{\v{S}}afarevi{\v{c}}}) \emph{family} $\ks\to \CC\cong H^{0,2}(S)$ parameterizing all complex elliptic K3 surfaces
with a fixed elliptic K3 surface $S\to \PP^1$ as relative Jacobian.

\smallskip

\noindent
(iii) The \emph{twistor family} $\ks\to\PP^1$ associated with a polarized K3
surface $(S,L)$.
\smallskip

A priori, these examples have not much in common. For example, the Dwork pencil is an algebraic family, while for the Brauer and the twistor family most of the fibres are not
 projective.  However, as we shall explain, the Hodge theory describing the three families is similar in that  $H^{2,0}(\ks_t)$ varies in $H^{2,0}(S)\oplus H^{0,2}(S)\oplus\CC \cdot \ell$ for a certain algebraic class $\ell\in H^{1,1}(S,\ZZ)$. 
The three examples above correspond
to $(\ell.\ell)<0$ (Dwork), $(\ell.\ell)=0$ (Brauer), and $(\ell.\ell)>0$ (twistor). 

\begin{ex}\label{ex:3FamEll}
To an elliptic K3 surface $S\to \PP^1$ with a section
one can associate brilliant families of all three types, choosing
$\ell$ as the fibre class of the elliptic fibration, as an
ample class, or as any class of negative square, e.g.\ the class
of a component of a reducible fibre.
\begin{picture}(100,140)
\put(185,19){\begin{tikzpicture}  \draw[black, thick] (-1,-1) -- (2,2);\end{tikzpicture}}
\put(145,19){\begin{tikzpicture}  \draw[black, thick] (1,-1) -- (-2,2);\end{tikzpicture}}
\put(207.7,19){\begin{tikzpicture}  \draw[black, thick] (0,-1) -- (0,2.6);\end{tikzpicture}}
\put(97,100){\small $(\ell.\ell)>0$}
\put(212,113){\small $(\ell.\ell)=0$}
\put(277,100){\small $(\ell.\ell)<0$}
\put(205.6,39.5){\small $\bullet$}
\put(215.3,39.5){\small $S$}
\put(122.3,82.5){\small twistor}
\put(263.3,82.5){\small Dwork}
\put(170.3,113){\small Brauer}
 \end{picture}

A similar picture can be drawn for non-elliptic K3 surfaces by working with 
higher-dimensional moduli spaces of sheaves on K3 surfaces, see \S \ref{sec:higherDimMS}.
\end{ex}

\subsection{}
The first result is the observation that the main result in \cite{HuyCM}
generalizes from twistor spaces associated to K3 surfaces with complex multiplication
to general brilliant deformations of K3 surfaces with complex multiplication.

\begin{thm}\label{thm:mainRM}
Assume $\ks\to C$ is a brilliant deformation of a K3 surface $S$ with complex multiplication. Then any algebraic fibre $\ks_t$ with $\rho(\ks_t)=\rho(\ks)$
has complex multiplication as well and the maximal totally real subfields
of the Hodge endomorphism rings of the rational
transcendental lattices $T(S)\otimes\QQ$ 
and $T(\ks_t)\otimes\QQ$ are isomorphic.
\end{thm}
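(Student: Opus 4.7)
My plan is to adapt the argument of \cite{HuyCM}, which handles the twistor case $(\ell,\ell)>0$, to general brilliant deformations. The essential input of that argument is not the sign of $(\ell,\ell)$ but rather the fact that $\sigma_t$ remains in the three-dimensional subspace $W_\CC := \CC\sigma_S \oplus \CC\bar\sigma_S \oplus \CC\ell$, so I expect the strategy to carry through uniformly across all brilliant deformations.

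First I would use parallel transport to identify $H^2(\ks_t,\QQ)$ with $H^2(S,\QQ)$. The brilliant condition then places $\sigma_t\in W_\CC$, so $T(\ks_t)\otimes\QQ$ is contained in the fixed rational ambient space $V := (T(S)\otimes\QQ) + \QQ\cdot\ell$, and the Picard hypothesis $\rho(\ks_t)=\rho(\ks)$ pins down its dimension and position inside $V$. In this way the comparison between the two transcendental Hodge structures reduces to a comparison inside the single rational space $V$.

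The CM structure on $S$ supplies a faithful action of the CM field $E := \End_{\rm Hdg}(T(S)\otimes\QQ)$, with maximal totally real subfield $E^+\subset E$. For $e\in E^+$ the action on $\CC\sigma_S$ and on $\CC\bar\sigma_S$ is by the same real scalar $\lambda(e)$, since $e$ is fixed by the complex conjugation of $E$. The core step is to parlay this observation into a $\QQ$-rational action of $E^+$ on $T(\ks_t)\otimes\QQ$ by Hodge endomorphisms of the $\ks_t$-Hodge structure, combining the CM action on $T(S)\otimes\QQ$ with a compensating action on $\QQ\ell$ chosen so as to preserve the period line $\CC\sigma_t$. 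Once such an action is in place, $E^+$ embeds into $\End_{\rm Hdg}(T(\ks_t)\otimes\QQ)$, and Zarhin's theorem together with the dimension count supplied by the Picard hypothesis forces the latter to be a CM field whose totally real subfield is isomorphic to $E^+$; in particular $\ks_t$ has CM.

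The hard part will be constructing this rational $E^+$-action: the two naive extensions (trivial or scalar on $\QQ\ell$) already fail to preserve the Hodge filtration of $\ks_t$, so one has to track the precise interplay between the CM rotation in $T(S)\otimes\QQ$ and the ``brilliant rotation'' $\sigma_S\leadsto\sigma_t$ within $W_\CC$. In the twistor case of \cite{HuyCM} this interplay is organised through the Kuga--Satake correspondence; verifying that the same recipe goes through uniformly for all three signs of $(\ell,\ell)$, and in particular in the degenerate Brauer case $(\ell,\ell)=0$, should be the technical heart of the generalization.
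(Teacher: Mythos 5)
Your overall strategy coincides with the paper's: reduce to Hodge theory by parallel transport, embed the maximal totally real subfield $K^0(\sigma_0)$ into $\End_{\rm Hdg}(T(\ks_t)\otimes\QQ)$, and then conclude CM and identify the totally real subfield by a dimension count over $K^0(\sigma_0)$ together with van Geemen's criterion (this is exactly Proposition \ref{prop:CMBrilliant} followed by Corollary \ref{cor:prop:CMBrilliant}). However, you stop short of the one step that carries all the content --- the construction of the rational $K^0(\sigma_0)$-action on $T_t$ --- and your guesses about where the difficulty sits are somewhat off. The paper does not route this through the Kuga--Satake correspondence: the construction in \cite[Prop.\ 3.8]{HuyCM} is direct linear algebra on $T_t=\ell'^\perp\subset T\oplus\QQ\cdot\ell$, where $\ell'$ spans $T_t^\perp$, and the only hypothesis it uses is $m=(\ell.\ell')\ne0$; the sign of $(\ell'.\ell')$, which is what changes when one passes from twistor to Dwork type, never enters. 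Observing this is literally the whole proof of Proposition \ref{prop:CMBrilliant}. Relatedly, your phrase about a ``compensating action on $\QQ\cdot\ell$'' is not quite the right picture for $d\ne0$: the endomorphism must be defined on the hyperplane $\ell'^\perp$, which is not of the form $T'\oplus\QQ\cdot\ell$, so one corrects the pullback of $\varphi$ along the projection $T_t\to T$ rather than extending $\varphi$ by a scalar on the $\ell$-direction.

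The second misdirection is your expectation that the degenerate Brauer case $(\ell.\ell)=0$ is the technical heart. It is in fact the easiest case: there the orthogonal projection $T_t\to T$ is an isometry of Hodge structures up to conjugation (Lemma \ref{lem:orthprojHodge}), so the \emph{entire} CM field $K(\sigma_0)$, not merely $K^0(\sigma_0)$, transfers with no further work. As written, then, your proposal is a correct plan whose decisive lemma is asserted rather than proved; to close the gap you should either reproduce the computation of \cite[Prop.\ 3.8]{HuyCM} and check that only $m\ne0$ is used, or handle $d=0$ separately via the projection isometry and $d<0$ by the sign-insensitive version of that computation.
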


Beware that except for scalar multiplications, Hodge endomorphisms contained
in the maximal totally real subfield do not deform sideways in a brilliant family, i.e.\ they
are typically not realized as Hodge endomorphisms of the transcendental lattice of a fibre with $ \rho(\ks_t)<\rho(S)$.
\smallskip

We think of the countable set of all algebraic fibres $\ks_t$, $t\in C$, with
$\rho(\ks_t)=\rho(S)$ as the Noether--Lefschetz locus ${\rm NL}(\ks/C)\subset C$. Then the result can be rephrased by saying
that the maximal totally real subfield of $\End_{\rm Hdg}(T(S)\otimes \QQ)$ is reproduced as a field of Hodge endomorphisms
of the transcendental lattice of the fibre over any point in the Noether--Lefschetz locus.

 Viewing endomorphisms
of $T(S)\otimes\QQ$ as Hodge classes on $S\times S$, the assertion becomes more
geometric. However, for $(\ell.\ell)\ne0$ we are currently lacking a geometric explanation for the reappearance of these algebraic classes over all points
in the Noether--Lefschetz locus,
see \S \ref{sec:GeomBrauer}.

\subsection{} The Hodge theoretic characterization of brilliant families
reveals that in some appropriate sense the Noether--Lefschetz locus ${\rm NL}(\ks/\PP^1)$ of the
twistor family associated with $(S,L)$ specializes to the Brauer group ${\rm Br}(S)$.
This affirms and refines the point of view, suggested by F.\ Charles and further advocated by D.\ Bragg and M.\ Lieblich \cite{BL}, that the Brauer group of a supersingular K3 surface should be
viewed as the analogue of the twistor base in positive characteristic. We 
can use Hodge theory to make
this more precise in characteristic zero as follows. 

\begin{thm}\label{thm:NLvsBr}
Consider an elliptic K3 surface $S$ with polarizations $L_s$
approaching the fibre class of an elliptic fibration of $S$.
Then the twistor families $\ks(s)\to\PP^1$ associated with $(S,L_s)$ restricted
to the upper hemisphere of $S^2\cong\PP^1$ specialize to the Brauer family $\ks\to \CC$. Furthermore, the Noether--Lefschetz loci
${\rm NL}(\ks(s)/\PP^1)$ specialize to ${\rm Br}(S)$ up to the free abelian group
$H^2(S,\ZZ)/\NS(S)$.
\end{thm}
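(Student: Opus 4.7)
The plan is to compute both the twistor period for $(S,L_s)$ and the Brauer period explicitly, observe that the former degenerates to the latter as $(L_s.L_s)\to 0$, and then to read off the Noether--Lefschetz condition as a quadratic in the twistor parameter $\lambda$ whose limit is the linear Brauer NL condition.

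Normalise $\sigma\in H^{2,0}(S)$ by $(\sigma.\bar\sigma)=1$ and set $\kappa_s:=(L_s.L_s)/2>0$. The brilliance condition $H^{2,0}(\ks(s)_\lambda)\subset\CC\sigma\oplus\CC\bar\sigma\oplus\CC L_s$ together with isotropy $(\sigma_\lambda.\sigma_\lambda)=0$ forces the twistor period, on the affine chart of $\PP^1$ with $\lambda=0$ corresponding to $S$, to
$$\sigma_\lambda\;=\;\sigma+\lambda L_s-\kappa_s\lambda^2\bar\sigma,$$
and a short computation gives $(\sigma_\lambda.\bar\sigma_\lambda)=(1+\kappa_s|\lambda|^2)^2$. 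The upper hemisphere is thus a disk in the $\lambda$-chart whose radius grows like $1/\sqrt{\kappa_s}$ and exhausts $\CC$ as $\kappa_s\to 0$; in this limit the formula reduces to $\sigma_\lambda=\sigma+\lambda f$, which is precisely the period of the Brauer family.

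For the NL claim, decompose $v\in H^2(S,\ZZ)$ as $v_a+v_t$ with $v_a\in\NS(S)_\QQ$ and $v_t\in T(S)_\QQ$, and set $\mu_t:=(v_t.\sigma)$, $\alpha_s:=(v_a.L_s)$. The $(1,1)$-condition $(v.\sigma_\lambda)=0$ becomes
$$\kappa_s\bar\mu_t\,\lambda^2-\alpha_s\,\lambda-\mu_t\;=\;0,\qquad\lambda_\pm=\frac{\alpha_s\pm\sqrt{\alpha_s^2+4\kappa_s|\mu_t|^2}}{2\kappa_s\bar\mu_t}.$$
Expanding in $\kappa_s$ one finds $\lambda_-=-\mu_t/\alpha_s+O(\kappa_s)\to -\mu_t/\alpha_f$ with $\alpha_f:=(v_a.f)$, whereas $\lambda_+\sim\alpha_s/(\kappa_s\bar\mu_t)$ escapes every compact set---in particular $\kappa_s|\lambda_+|^2\to\infty$, so $\lambda_+$ leaves the upper hemisphere. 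The upper-hemisphere portion of ${\rm NL}(\ks(s)/\PP^1)$ therefore specialises to $\{-\mu_t/\alpha_f\mid v=v_a+v_t\in H^2(S,\ZZ),\,(v_a.f)\neq 0\}\cup\{0\}\subset\CC$, which is exactly ${\rm NL}(\ks/\CC)$ for the Brauer family (the corresponding condition on the Brauer fibre with period $\sigma+zf$ reads $\mu_t+z\alpha_f=0$).

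Fixing an elliptic fibration with a section, $(v_a.f)$ realises every nonzero integer, so the limit NL locus coincides with the image of $T(S)_\QQ$ under the pairing with $\sigma$ in $H^{0,2}(S)\cong\CC$. Quotienting by the image of $T(S)\cong H^2(S,\ZZ)/\NS(S)$ produces $T(S)_\QQ/T(S)=\Br(S)$. The main obstacle is not in the algebra above but in making ``specialises'' precise: one must assemble the twistor families over a disc in the polarisation parameter $s$ into a single holomorphic family $\widetilde\ks\to\PP^1\times B$ whose central fibre over $s=0$, restricted to the upper hemisphere, is the Brauer family, and then verify the appropriate lower semicontinuity of the NL locus in this family. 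The quadratic-to-linear degeneration displayed above then determines which roots $\lambda_\pm(v,s)$ survive on the upper hemisphere as $s\to 0$ and yields the identification with $\Br(S)$ up to $H^2(S,\ZZ)/\NS(S)$.
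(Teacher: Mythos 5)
Your argument is correct, and it reaches the same conclusion by a genuinely more computational route than the paper. The paper proves both assertions by embedding everything into the two-dimensional period domain $D_{\ell_1,\ell_2}$, with $\ell_1$ a rescaled polarization and $\ell_2=f-\ell_1$ of opposite square: the first assertion is then Remark \ref{rem:equatorflow} (the equators $S^1_{\ell_s}$ flow into the point $[f]$ as $s\to1$), and the second is Proposition \ref{prop:NL} applied to $\ell=\ell_1$, where the curves $D_{\ell_1,\ell_2}\cap\ell'^\perp$ meet every slice $D_\ell$ exactly in its Noether--Lefschetz locus, combined with Lemma \ref{lem:NLBrauer} identifying ${\rm NL}_f\cong T$. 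You instead work in an affine chart, write the twistor period as $\sigma_\lambda=\sigma+\lambda L_s-\kappa_s\lambda^2\bar\sigma$, locate the equator at $\kappa_s|\lambda|^2=1$, and track the two roots of the quadratic $(v.\sigma_\lambda)=0$ for each integral class $v$; since $|\lambda_+\lambda_-|=1/\kappa_s$, exactly one root stays in the upper hemisphere and converges to the linear Brauer solution $-(v.\sigma)/(v.f)$. This buys a concrete, quantitative picture of the degeneration and makes the root selection explicit, whereas the paper's coordinate-free setup additionally yields Theorem \ref{thm:comp} (the curves $\ell'^\perp$ are precisely the connecting algebraic families) for free. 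Two remarks. First, the "main obstacle" you flag --- assembling a holomorphic family over $\PP^1\times B$ and proving semicontinuity of the NL locus --- is not actually required: the paper interprets "specialize" purely at the level of period domains and Hodge structures (this is what \S\ref{sec:TwBrHs} and Remark \ref{rem:equatorflow} supply), and your explicit period computation already establishes the statement in that sense. Second, your final identification $T(S)_\QQ/T(S)=\Br(S)$ holds only up to the finite kernel of ${\rm Br}^{\rm an}(T(S))\twoheadrightarrow{\rm Br}^{\rm an}(S)$ coming from the finite-index inclusion $T(S)\subset H^2(S,\ZZ)/\NS(S)$; this is exactly the discrepancy the theorem's phrase "up to the free abelian group $H^2(S,\ZZ)/\NS(S)$" and the surrounding discussion in \S\ref{sec:GeomBrauer} are designed to absorb, so it is a harmless imprecision, but worth stating.
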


 \begin{picture}(100,120)
\put(145,19){\begin{tikzpicture}  \draw[black, thick] (1,-1) -- (-2,2);\end{tikzpicture}}
\put(207.7,19){\begin{tikzpicture}  \draw[black, thick] (0,-1) -- (0,2.6);\end{tikzpicture}}
\put(129,118){\tiny$\ks(s)$}
\put(199,118){\tiny$\ks$}
\put(149,118){$\xymatrix@C=15pt{\ar@{..>}[rr]&&}$}
\put(169,76){$\xymatrix@C=10pt{\ar@{..>}[rr]&&}$}
\put(219,76){\tiny${\rm Br}(S)$}
\put(117,76){\tiny${\rm NL}(\ks(s)/\PP^1)$}
\put(206,107){\tiny$\bullet$}
\put(206,97){\tiny$\bullet$}
\put(206,87){\tiny$\bullet$}
\put(206,77){\tiny$\bullet$}
\put(206,67){\tiny$\bullet$}
\put(206,57){\tiny$\bullet$}
\put(205.3,39.5){$\bullet$}
\put(149,97){\tiny$\bullet$}
\put(159,87){\tiny$\bullet$}
\put(169,77){\tiny$\bullet$}
\put(179,67){\tiny$\bullet$}
\put(189,57){\tiny$\bullet$}
\put(215.3,39.5){\small $S$}
 \end{picture}

The precise meaning of the notions involved in this statement will be explained in \S \ref{sec:TwBrHs} and \S \ref{sec:K3Transl}. Viewing the Brauer (or Tate--{\v{S}}afarevi{\v{c}}) family as a degeneration of twistor lines is not new and has
been discussed already by E.\ Markman \cite[Rem.\ 4.6]{Mark}, see also work of
M.\ Verbitsky \cite{Verb}. Our discussion adds the comparison of
Noether--Lefschetz loci and Brauer groups to the picture. 
The proof can be adapted to show analogously
that the Dwork family with its Noether--Lefschetz locus specializes to the Brauer family
and the Brauer group.

\subsection{} 
The three types of brilliant families, twistor, Brauer, and Dwork, can be put 
in a two-dimensional family. Those parameterizing algebraic K3 surfaces form
a countable union of curves, each of which intersects the three brilliant deformation types
in their Noether--Lefschetz locus. This allows one to view the transcendental twistor
construction as a combination of the Brauer family and a family of algebraic K3 surfaces.

\begin{thm}\label{thm:comp}
Assume $S$ is an elliptic K3 surface with an ample line bundle $L$. For every point
$t\in {\rm NL}(\ks/\PP^1)$ in the Noether--Lefschetz locus of the associated twistor
space $\ks\to\PP^1$, there exists a Brauer class $\alpha\in {\rm Br}(S)$ such that
the associated K3 surface $S_\alpha$ and $\ks_t$ are naturally
fibres of a holomorphic family of algebraic K3 surfaces.
\end{thm}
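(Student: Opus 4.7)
The plan is to interpolate between $\ks_t$ and an appropriate Brauer twist $S_\alpha$ inside a single $19$-dimensional moduli space of polarised K3 surfaces; a smooth algebraic curve through the two points then produces the required family. The polarisation is extracted from a common $2$-dimensional period slice
\[
D\;:=\;\Omega\,\cap\,\PP\bigl(\CC\sigma_0\oplus \CC\bar\sigma_0\oplus \CC L\oplus \CC f\bigr),
\]
where $0\neq \sigma_0\in H^{2,0}(S)$, $f$ is the class of the elliptic fibre and $\Omega$ is the K3 period domain; the quadric condition pins down the $\bar\sigma_0$-coefficient in terms of the $L$- and $f$-coefficients, so $D$ is indeed $2$-dimensional. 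Over $D$ I have a holomorphic family of marked K3 surfaces (from the universal deformation) that contains both the twistor line $T\subset D$ of $(S,L)$ (cut out by the vanishing of the $f$-coefficient) and the Brauer line $B\subset D$ (cut out by the vanishing of the $L$-coefficient), meeting transversally at the point representing $S$.

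Given $t\in {\rm NL}(\ks/\PP^1)$ on $T$, the projectivity of $\ks_t$ is witnessed by a class $\ell'\in H^2(S,\ZZ)$ independent of $\langle L,f\rangle_\QQ$ with $(\ell',\sigma_t)=0$. Using Hodge index on the rank-$3$ saturation of $\langle L,f,\ell'\rangle$ and $(-2)$-reflections in $\NS(\ks_t)$, I replace $\ell'$ by a primitive $\ell^\ast\in H^2(S,\ZZ)$ of positive square with $(\ell^\ast,f)\neq 0$, still orthogonal to $\sigma_t$, and chosen ample on $\ks_t$. In the chart $\sigma=\sigma_0+\alpha f+\beta L+\gamma\bar\sigma_0$, the analytic sub-curve
\[
T_{\ell^\ast}\;:=\;\{[\sigma]\in D:(\sigma,\ell^\ast)=0\}\;\subset\; D
\]
passes through $\ks_t$ and cuts the Brauer line $B=\{\beta=0\}$ (on which the quadric relation forces $\gamma=0$) at the unique point with period $\sigma_0+\alpha\cdot f$, where
\[
\alpha\;=\;-\,\frac{(\ell^\ast,\sigma_0)}{(\ell^\ast,f)}.
\]
Since $\ell^\ast$ is integral, $(\ell^\ast,\sigma_0)$ lies in the image of $T(S)\to H^{0,2}(S)\cong \CC$; the identification furnished by Theorem~\ref{thm:NLvsBr} then realises $\alpha$ as an element of $\Br(S)$ modulo the free abelian group $H^2(S,\ZZ)/\NS(S)$, and by the defining equation $\ell^\ast\in \NS(S_\alpha)$.

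Both $\ks_t$ and $S_\alpha$ are thus $\ell^\ast$-polarised projective K3 surfaces, hence appear as two distinct points of the connected $19$-dimensional moduli space $\km_{\ell^\ast}$; pulling back the universal family to any smooth algebraic curve in $\km_{\ell^\ast}$ through these points supplies the required holomorphic family of algebraic K3 surfaces. The main obstacle is ensuring that $\ell^\ast$ lies simultaneously in the \emph{ample} chambers of $\ks_t$ and $S_\alpha$: on $\ks_t$ a suitable $(-2)$-reflection does the job, but the ample cone on $S_\alpha$ is cut out by a different system of walls coming from the richer $\NS(S_\alpha)$. One therefore has to iterate the joint chamber analysis, adjusting $\ell^\ast$ (and consequently $\alpha$) by integral classes that are Hodge-compatible on both sides, until a common ample representative is obtained. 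This chamber bookkeeping is the technical heart of the argument; the rest reduces to linear algebra on periods and the surjectivity of the period map.
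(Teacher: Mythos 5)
Your construction of the two--dimensional period slice, of the curve $\ell^{\ast\perp}\cap D$ through $\sigma_t$, and of the intersection point $\sigma_0+\alpha f$ with $\alpha=-(\ell^\ast.\sigma_0)/(\ell^\ast.f)$ on the Brauer line is exactly the paper's argument (Proposition \ref{prop:NL} and Corollary \ref{cor:HT04}, with $\ell_1$ a multiple of ${\rm c}_1(L)$ and $\ell_2=f-\ell_1$); the verification that $\alpha$ is a torsion class, via the rationality of the $T$--component of the integral class $\ell^\ast$, is likewise the content of Lemma \ref{lem:NLBrauer}. Up to that point the proposal is correct and essentially identical to the paper.

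The gap is in your last step, and you flag it yourself: you insist on replacing $\ell'$ by a class that is simultaneously \emph{ample} on $\ks_t$ and on $S_\alpha$, so as to view both surfaces as points of the polarized moduli space $\km_{\ell^\ast}$, and you leave the resulting ``joint chamber analysis'' unresolved. This is a genuine hole as written --- a $(-2)$--reflection arranging ampleness on $\ks_t$ has no reason to land $\ell^\ast$ in the ample chamber of $S_\alpha$, whose wall system is different, and each adjustment of $\ell^\ast$ changes $\alpha$ and hence the target chamber, so the iteration you describe is not obviously convergent. It is also unnecessary. For a K3 surface, projectivity is equivalent to the existence of an integral $(1,1)$--class of positive square; no ampleness is needed. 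The paper therefore takes the holomorphic family directly over the curve $D_{\ell_1,\ell_2}\cap\ell'^\perp$ in the period domain (via surjectivity of the period map), observes that $\ell'$ stays of type $(1,1)$ with $(\ell'.\ell')>0$ on every fibre, and concludes that all fibres are algebraic --- no polarized moduli space, no chamber bookkeeping. Replacing your final paragraph by this observation closes the argument.
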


The interest in this viewpoint stems from the fact that we do understand 
the propagation of algebraic classes in Theorem \ref{thm:mainRM} for brilliant families
of Brauer type, see \S \ref{sec:GeomBrauer}, and that the propagation
along families of algebraic K3 surfaces should be more accessible to geometric arguments than for the purely transcendental twistor families. 

\begin{picture}(100,140)
\put(145,19){\begin{tikzpicture}  \draw[black, thick] (1,-1) -- (-2,2);\end{tikzpicture}}
\put(207.7,19){\begin{tikzpicture}  \draw[black, thick] (0,-1) -- (0,2.6);\end{tikzpicture}}
\put(135,99){\begin{tikzpicture}  \draw[gray, thick] (-1,2) -- (2,2);\end{tikzpicture}}
\put(148,97){\small $\bullet$}
\put(140,89){\small $\ks_t$}
\put(210,89){\small $S_\alpha$}
\put(205.5,97){\small $\bullet$}
\put(205.6,39.5){\small $\bullet$}
\put(215.3,39.5){\small $S$}
\put(146.3,59.5){\small twistor}
\put(160.3,105.5){\small algebraic}
\put(212.3,115){\small Brauer}
 \end{picture}

Once again, a similar result holds linking Dwork and Brauer families.

\smallskip

\noindent{\bf Acknowledgement:}  I would like to thank F.\ Charles for his interest and comments and the referee for valuable suggestions.

\section{Geometric families}\label{sec:Geom}
We start by reviewing some of the geometric properties of the three families mentioned 
in the introduction.

\subsection{}\label{sec:Dwork} The Dwork (or Fermat) pencil 
of quartic surfaces $\ks_t\subset \PP^3$ defined by $\sum_{i=0}^3x_i^4-4t\prod_{i=0}^3 x_i$ parameterizes K3 surfaces for $t^4\ne 1$. The four singular surfaces $\ks_t$, $t\in\{\pm1,\pm i\}$, have 16 ordinary double points and their minimal resolutions are
K3 surfaces. In fact,
by passing to a double cover $C\to {\mathbb A}^1$ ramified at the four points
and then taking a small resolution of the singular base change, the Dwork pencil can be turned into an algebraic family of quasi-polarized K3 surfaces.

The Fermat quartic $S=\ks_0$ is known to have maximal Picard number $\rho(S)=20$,
in fact its N\'eron--Severi group $\NS(S)\cong\ZZ^{\oplus 20}$ is generated by the lines contained in $S$,
and its transcendental lattice $T(S)$ is of rank two with intersection matrix ${\rm diag}(8,8)$,
see \cite{SSL}.  Furthermore, there exists a primitive sublattice $\ell^\perp\subset\NS(S)$ of corank one that stays algebraic along the whole family and $\ell^\perp\cong \NS(\ks_t)$ for very general $t$. By Hodge index theorem, $(\ell.\ell)<0$ and, more precisely, $\ell$ can be chosen of the form $L_1+L_2-L_3-L_4$ for two disjoint pairs of intersecting lines $L_i\subset S$ such that $(\ell.\ell)=-4$.\footnote{Thanks to E.\ Sert\"oz for this information.} Thus, under parallel transport,
$$H^{2,0}(\ks_t)\subset H^{2,0}(S)\oplus H^{0,2}(S)\oplus \CC\cdot \ell=T(S)_\CC\oplus\CC\cdot\ell.$$

For another example, consider a non-trivial deformation $E_t$ of a CM elliptic curve $E$
 and let  $\ks_t$ be the family of Kummer surfaces associated with
$E_t\times E_t$. Again, $\rho(\ks_t)=19$ for the very general $E_t$ and $\rho(\ks_t)=20$ whenever $E_t$ has complex multiplication.
Clearly, $H^{2,0}(\ks_t)=H^{1,0}(E_t)\otimes H^{1,0}(E_t)\subset H^2(E_t\times E_t,\CC)
\subset H^2(\ks_t,\CC)$
is contained in the three-dimensional space orthogonal to the  classes
$[E_t\times\{{\rm pt}\}],[\{{\rm pt}\}\times E_t],[\Delta_t]\in H^2(E_t\times E_t,\ZZ)$
or, in other words, $$
H^{2,0}(\ks_t)\subset (H^{1,0}(E)\otimes H^{1,0}(E))\oplus(H^{0,1}(E)\otimes H^{0,1}(E))\oplus \CC\cdot \ell$$ where $\ell$ is an appropriate linear combination of the
 graph $\Gamma$ of a complex multiplication of $E$ and the aforementioned  three algebraic classes.
\smallskip

More generally, any family of projective K3 surfaces $\ks_t$ for which the very general fibre has Picard number $\rho(\ks_t)=19$ is of the type covered by our considerations.
These families yield one-dimensional Shimura special cycles in the moduli space of polarized K3 surfaces.
It is possible to construct holomorphic families  of projective K3 surfaces $\ks_t$
over a one-dimensional base that also fit our theory and have $\rho(\ks_t)<19$ for the very general fibre $\ks_t$, but then the family does not correspond
to an algebraic curve in the moduli space, cf.\ Remark \ref{rem:AO}.

\subsection{}\label{sec:GeomBrauer} We next come to the Brauer family.
Consider a K3 surface $S$ together with a fibration
$\pi\colon S\to\PP^1$ by curves of genus one. We shall assume that
the fibration has a section, so that all smooth fibres are elliptic curves,
in which case $\pi$ is called an elliptic fibration. The Fermat quartic is
an example of an elliptic K3 surface and it is so in more than one way.

The Tate--{\v{S}}afarevi{\v{c}} group of $S$ parameterizes all K3 surfaces with a
genus one fibration and an isomorphism of its relative Jacobian fibration with $S$. It
comes in two flavors: The algebraic and the analytic Tate--{\v{S}}afarevi{\v{c}} group. More precisely, $\Sha^{\rm an}(S)$ is the set of all isomorphism classes of pairs
 $(S',\psi)$ consisting of a complex K3 surface  $S'$, typically not projective,
 with a fixed fibration $S'\to \PP^1$ by curves of genus one and
   an isomorphism $\psi\colon J(S'/\PP^1)\congpf S$ over $\PP^1$
 between the relative Jacobian $J(S'/\PP^1)$ and $S$, see \cite[Ch.\ 1.5]{FM} or \cite[Ch.\ 11.5]{HuyK3}. The algebraic version is $\Sha(S)$, which is defined similarly but now
 requiring $S'$ to be projective. Clearly, then
 $$\Sha(S)\subset \Sha^{\rm an}(S).$$
Both sets are indeed groups and as such they are naturally
isomorphic to the algebraic resp.\ analytic Brauer groups
$$\xymatrix@C=6pt@R=0pt{\Sha(S)&\subset& \Sha^{\rm an}(S)\\
|\wr&&|\wr\\
{\rm Br}(S)&\subset&{\rm Br}^{\rm an}(S).}$$
Here, the algebraic Brauer group ${\rm Br}(S)$ can  be described cohomologically
as the torsion subgroup of $H^2(S,\ko_S^\ast)$ or, more directly, as $H^2_{\text{\'et}}(S,{\mathbb G}_m)$, while the analytic Brauer group is all of $H^2(S,\ko_S^\ast)$.
From the exponential sequence one obtains an exact sequence
$$\xymatrix@C=15pt{0\ar[r]&\Pic(S)\ar[r]&H^2(S,\ZZ)\ar[r]&H^2(S,\ko_S)\ar[r]&H^2(S,\ko_S^\ast)\ar[r]&0,}$$
which leads to the description of the analytic Brauer group
as $${\rm Br}^{\rm an}(S)\cong H^2(S,\ko_S)/{\rm coker}\!\left(\NS(S)\,\hookrightarrow H^2(S,\ZZ)\right)\cong \CC/\ZZ^{\oplus 22-\rho(S)}.$$
Note that the obvious inclusion $T(S)\subset H^2(S,\ZZ)/\NS(S)$ is of finite index
 and, therefore, the natural surjection
$$H^{0,2}(S)/T(S)\cong H^2(S,\ko_S)/T(S)\twoheadrightarrow {\rm Br}^{\rm an}(S)$$ has a finite, typically non-trivial,  kernel.
From a Hodge theoretic perspective $H^2(S,\ko_S)/T(S)$ is more natural, but the difference between the two  groups will be of no importance to us. 
Note that unless $\rho=20$, the groups $$\Sha^{\rm an}(S)\cong {\rm Br}^{\rm an}(S)\cong H^{0,2}(S)/\ZZ^{\oplus 22-\rho(S)}$$ have no reasonable geometric structure and cannot
serve as a basis for a family of  all K3 surfaces $S'$ parameterized by $\Sha^{\rm an}(S)$.
However, there exists a family, the \emph{Brauer family}
\begin{equation}\label{eqn:BrauerFam}
\ks\to H^{0,2}(S)\cong \CC,
\end{equation}
for which the fibre $\ks_t$ over $t\in H^{0,2}(S)$ is isomorphic to the K3 surface
$S'$ corresponding to the image $(S',\psi)$ of $t$ under $H^{0,2}(S)\twoheadrightarrow {\rm Br}(S)\cong\Sha^{\rm an}(S)$, see \cite[Ch.\ 1.5]{FM} for a detailed discussion. Note
that $\ks\to \CC$ is actually a family of K3 surfaces together with a genus one fibration
which is the one that is given by considering $(S',\psi)$ as an element
in $\Sha^{\rm an}(S)$.

By construction, we again have
$$H^{2,0}(\ks_t)\subset H^{2,0}(S)\oplus\CC\cdot  \ell\subset H^{2,0}(S)\oplus H^{0,2}(S)\oplus\CC\cdot \ell,$$
where  $\ell=f\in H^2(S,\ZZ)$ denotes the class of a fibre of the elliptic
fibration $S\to \PP^1$. The construction does not provide a topological or $\kc^\infty$-trivialization, but see Remark \ref{rem:TrivBrauer}.

\begin{remark}
Instead of studying the Brauer family $\ks\to \CC\cong H^{0,2}(S)$ one can
consider the constant family $S\times\CC\to \CC$ and endow it with
a universal Brauer class ${\pmb\alpha}\in H^2(S\times\CC,\ko^\ast)$. Explicitly,
using $H^2(S\times \CC,\ko)\cong H^0(\CC,\ko)\otimes H^{0,2}(S)$,
which under the identification $\CC=H^{0,2}(S)$ contains $H^{0,2}(S)^\ast\otimes H^{0,2}(S)$, 
the universal Brauer class ${\pmb\alpha}$ is realized as the image of
${\rm id}_{H^{0,2}}$ under the exponential map $H^2(S\times \CC,\ko)\to H^2(S\times\CC,\ko^\ast)$.
By construction, if $\alpha\in H^2(S,\ko^\ast_S)$ is the image of $t\in \CC=H^{0,2}(S)$,
then ${\pmb \alpha}|_{S\times\{t\}}=\alpha$. The situation was discussed for supersingular K3 surfaces in positive characteristic in \cite{BL}.
\end{remark}

\begin{remark}
The analogy between the Brauer groups of supersingular K3 surfaces
and twistor spaces promoted in \cite{BL} is sometimes met with the following objection.
The Hochschild cohomology $H\!H^2(S)\cong H^2(S,\ko_S)\oplus H^1(S,\kt_S)\oplus H^0(S,\bigwedge^2\kt_S)$ parameterizes all infinitesimal deformations of $S$, classical commutative as
well as non-commutative ones. First order classical deformations correspond
to classes in $H^1(S,\kt_S)$, while non-commutative ones associated to
deformations of the standard bounded derived category $\Db(S)$ of coherent sheaves to
the bounded derived category $\Db(S,\alpha)$ of $\alpha$-twisted coherent sheaves
correspond to classes in $H^2(S,\ko_S)$. How could then possibly 
the tangent space of ${\rm Br}^{\rm an}(S)$, which is naturally identified
with $H^2(S,\ko_S)$, end up in the direct summand $H^1(S,\kt_S)$?

The answer to this is that it is actually not $S$ that is deformed in the Brauer family above
but its Fourier--Mukai partner, which just happens to be isomorphic to $S$. More precisely, the automorphism of  $H\!H^2(S)$ induced by the relative Poincar\'e sheaf $\kp$ on $S\times_{\PP^1}S$  is compatible with the action of $\kp$
on Hochschild homology $H\!H_\ast(S)\cong \bigoplus_{q-p=\ast} H^{p,q}(S)$
considered as a module over $H\!H^\ast(S)$.
Identifying $H\!H_\ast(S)$ with (ungraded) de Rham cohomology $H^\ast(S,\CC)$
and using that the action of $\kp$ on $H^\ast(S,\ZZ)$ identifies $(H^0\oplus H^4)(S,\ZZ)$
with the hyperbolic plane in ${\rm NS}(S)\subset H^2(S,\ZZ)$ spanned by
the fibre class $\ell$ and the class of the section, one finds
that $\kp$ indeed sends $H^2(S,\ko_S)\subset H\!H^2(S)$ into $H^1(S,\kt_S)\subset H\!H^2(S)$. 

A similar point of view is exploited in \S \ref{sec:higherDimMS}, where non-commutative deformations of $S$ corresponding to Brauer classes are
interpreted in terms of classical commutative deformations of a certain moduli space
of sheaves on $S$.
\end{remark}

For $\alpha\in {\rm Br}(S)$, let $(S_\alpha,\psi_\alpha)\in \Sha(S)$ be the corresponding
algebraic K3 surface $\pi_\alpha\colon S_\alpha\to\PP^1$ together with an isomorphism $\psi_\alpha\colon J(S_\alpha/\PP^1)\congpf S$. The relative Jacobian $J(S_\alpha/\PP^1)$ can be interpreted as a moduli space of stable sheaves concentrated on the fibres of $\pi_\alpha$, but it is not a fine moduli space. In fact, the obstruction to the existence of a universal family is exactly
$\alpha\in {\rm Br}(S)$. In other words,  a universal sheaf $\kp_\alpha$ on 
$S_\alpha\times_{\PP^1}S$ exists but only as a twisted sheaf, where the twist
is with respect to $1\boxtimes\alpha\in {\rm Br}(S_\alpha\times S)$.  The usual Fourier--Mukai formalism then yields
an exact equivalence
$$\Db(S_\alpha)\cong \Db(S,\alpha)$$
between the bounded derived category $\Db(S_\alpha)$ of coherent sheaves on $S_\alpha$ and the bounded derived category $\Db(S,\alpha)$ of
$\alpha$-twisted coherent sheaves on $S$, for more information and references
see \cite[\S 10.2.2\,\&\,Rem.\ 11.5.9]{HuyK3}. The Mukai vector of the twisted universal
sheaf yields a Hodge isometry $\widetilde H(S_\alpha,\ZZ)\cong \widetilde H(S,\alpha,\ZZ)$,
which restricts to a Hodge isometry between their transcendental lattices $T(S_\alpha)\cong T(S,\alpha)$, see \cite{HuySeattle,HuyStellari} for details. Since the transcendental lattice of the twisted K3 surface $(S,\alpha)$
is a finite index sub-Hodge structure of $T(S)$, this yields  a Hodge isometry
\begin{equation}\label{eqnTSSalpha}
T(S_\alpha)_\QQ\cong T(S)_\QQ.
\end{equation}
Moreover, (\ref{eqnTSSalpha}) is algebraic, i.e.\ it is defined by an algebraic class
in $(T(S_\alpha)\otimes T(S))_\QQ^{2,2}\subset H^{2,2}(S_\alpha\times S,\QQ)$.
Here, we use that the $(1\boxtimes \alpha)$-twisted sheaf $\kp$ becomes
untwisted by passing to its derived tensor power $\kp^{\otimes r}$ for $r={\rm ord}(\alpha)$. The Chern classes of the untwisted sheaves $\kp^{\otimes r}$ exist in the
traditional sense, which eventually leads to a description of (\ref{eqnTSSalpha}) in terms
of algebraic classes associated to untwisted sheaves. 

\begin{remark}
There is a slightly confusing point about the relation between the transcendental lattices
$T(S_\alpha)$, $T(S)$, and $T(S,\alpha)$. By definition,
$T(S_\alpha)_\QQ\subset H^2(S_\alpha,\QQ)$ and $T(S)_\QQ\subset H^2(S,\QQ)$, while $T(S,\alpha)_\QQ\subset \widetilde H(S,\alpha,\QQ)$ is more naturally viewed as
$T(S,\alpha)_\QQ=\exp(B)\cdot T(S)_\QQ\subset (H^2\oplus H^4)(S,\QQ)$, where $B\in H^2(S,\QQ)$ lifts $\alpha$. The cohomological Fourier--Mukai formalism above then yields
$T(S_\alpha)_\QQ\cong T(S,\alpha)_\QQ\subset (H^2\oplus H^4)(S,\QQ)$. On the other hand,
parallel transport along the Brauer family leads to an inclusion $T(S_\alpha)_\QQ\subset T(S)_\QQ\oplus \QQ\cdot f\subset H^2(S,\QQ)$, where $f$ is the class of the fibre of the elliptic fibration $S\to \PP^1$. Furthermore, the image  of $T(S_\alpha)\cong T(S,\alpha)\subset (H^2\oplus H^4)(S,\ZZ)$ under the projection onto $H^2(S,\ZZ)$ identifies $T(S_\alpha)$ with the kernel of $T(S)\to \QQ/\ZZ$, $\gamma\mapsto (B.\gamma)$, which eventually leads to
the Hodge isometry $T(S_\alpha)_\QQ\cong T(S)_\QQ$.
In particular, the one-dimensional $H^{2,0}(S_\alpha)\subset H^2(S_\alpha,\CC)$ is via
(\ref{eqnTSSalpha}) mapped into $H^{2,0}(S)\oplus H^4(S,\CC)$ while
parallel transport maps it into $H^{2,0}(S)\oplus\CC\cdot f$.
\end{remark}

\begin{remark} The preceding discussion can be made to work with suitable
modifications in the case when $\alpha$ is not torsion. Again $S$ can be viewed as
a coarse moduli space of torsion sheaves on $S_\alpha$ for which the universal
sheaf is twisted with respect to the non-torsion class $\alpha$ on $S$.
For moduli spaces of stable sheaves on non-projective K3 surfaces see \cite{PeregoToma}.\footnote{Note however that  the case of  torsion sheaves needs extra care, but as it is not essential for our purpose, we ignore this point.}
There is still a Hodge isometry $T(S_\alpha)\cong T(S,\alpha)$, only that now $T(S,\alpha)$
cannot be viewed just as the $B$-field shift of $T(S)$ (up to finite index). By definition, it
is the minimal primitive sub-Hodge structure of $H^2(S,\ZZ)\oplus H^4(S,\ZZ)$ with its $(2,0)$-part spanned by $\sigma+\sigma\wedge t$, where $t\in \CC\cong H^{0,2}(S)$ lifts $\alpha$ under $H^{0,2}(S)\to {\rm Br}^{\rm an}(S)$. In fact, when
$\alpha$ is non-torsion, the rank of the transcendental lattice $T(S_\alpha)$ exceeds the
rank  of $T(S)$ and, moreover, the intersection form is degenerate on it.
\end{remark}

Clearly, since $S_\alpha$ and $S$ have Hodge isometric rational transcendental lattices
for $\alpha\in{\rm Br}(S)$,
their fields of Hodge endomorphisms are isomorphic:
$$\End_{\rm Hdg}(T(S_\alpha)\otimes \QQ)\cong \End_{\rm Hdg}(T(S)\otimes\QQ).$$
Furthermore, viewing $S$ as a moduli space of sheaves on $S_\alpha$ or, conversely,
$S_\alpha$ as a moduli space of $\alpha$-twisted sheaves on $S$, provides us
with a geometric understanding of this isomorphism.
The Mukai vector of (some tensor power of) $\kp\boxtimes\kp$  on $(S_\alpha\times S)^2$ maps
any Hodge endomorphism $\varphi\in \End_{\rm Hdg}(T(S)\otimes\QQ)$ or, equivalently, any Hodge class $\varphi \in(T(S)\otimes T(S))^{2,2}_\QQ\subset H^{2,2}(S\times S,\QQ)$  to a Hodge class on $S_\alpha\times S_\alpha$. Under this map, algebraic classes are mapped to algebraic
classes. Note, however, that the Hodge conjecture for squares of K3 surfaces is only known for K3 surfaces with CM endomorphism field.

We summarize the above discussion as follows.

\begin{prop}
Consider the Brauer family (\ref{eqn:BrauerFam}) $\ks\to\CC$ associated
with an elliptic K3 surface $S\to \PP^1$. Then for any algebraic fibre $\ks_t$ there
exists an algebraic Hodge isometry
$$T(S)\otimes \QQ\cong T(\ks_t)\otimes\QQ.$$
Furthermore, the induced isomorphism of their endomorphism fields
$$\End_{\rm Hdg}(T(S)\otimes \QQ)\cong\End_{\rm Hdg}(T(\ks_t)\otimes\QQ)$$
maps algebraic classes to algebraic classes.\qed
\end{prop}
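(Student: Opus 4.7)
My plan is to assemble the proposition directly from the Fourier--Mukai picture laid out in the preceding paragraphs, with the torsion order of the Brauer class playing a central role. First I would identify the algebraic fibres: under the surjection $H^{0,2}(S)\twoheadrightarrow{\rm Br}^{\rm an}(S)\cong\Sha^{\rm an}(S)$, a fibre $\ks_t$ is projective exactly when its class is torsion, i.e.\ lies in ${\rm Br}(S)\cong\Sha(S)$. For such $t$, I would write $\ks_t=S_\alpha$ with $\alpha\in{\rm Br}(S)$ of some order $r$, so that $S$ is the relative Jacobian of the elliptic fibration $\pi_\alpha\colon S_\alpha\to\PP^1$.

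The next step is to realize the Hodge isometry $T(S_\alpha)_\QQ\cong T(S)_\QQ$ as an algebraic correspondence. The relative Jacobian interpretation of $S$ gives a universal sheaf $\kp$ on $S_\alpha\times_{\PP^1}S$, twisted by $1\boxtimes\alpha$. Its Mukai vector induces the Hodge isometry $\widetilde H(S_\alpha,\ZZ)\cong\widetilde H(S,\alpha,\ZZ)$, restricting to $T(S_\alpha)\cong T(S,\alpha)$, and composed with the rational equality $T(S,\alpha)_\QQ=T(S)_\QQ$ (obtained from the $B$-field shift, which is of finite index) this yields (\ref{eqnTSSalpha}). To upgrade this to an algebraic class in $H^{2,2}(S_\alpha\times S,\QQ)$, I would pass to the derived tensor power $\kp^{\otimes r}$; since $r\cdot\alpha=0$, this is a genuine untwisted object of $\Db(S_\alpha\times S)$ whose Chern character lies in the classical integral cohomology and defines an algebraic correspondence realizing $r$ times the Hodge isometry, harmless after tensoring with $\QQ$.

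For the transport of Hodge endomorphisms I would run the same argument on the square. A Hodge endomorphism $\varphi\in\End_{\rm Hdg}(T(S)_\QQ)$ sits inside $(T(S)\otimes T(S))^{2,2}_\QQ\subset H^{2,2}(S\times S,\QQ)$, and I convolve it with the correspondence determined by $\kp\boxtimes\kp$ on $(S_\alpha\times S)^2=S_\alpha^2\times S^2$ to land in $\End_{\rm Hdg}(T(S_\alpha)_\QQ)$. Functoriality of Fourier--Mukai composition makes the resulting map a field isomorphism, and replacing $\kp\boxtimes\kp$ by its $r$-th derived tensor power untwists everything simultaneously, so that algebraic classes on $S\times S$ are sent to algebraic classes on $S_\alpha\times S_\alpha$.

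The main obstacle is precisely the twist: Chern classes, Mukai vectors, and the algebraicity of Fourier--Mukai correspondences are all classical notions that a priori make sense only for untwisted sheaves. The entire argument hinges on the simple but crucial device of replacing $\kp$ by $\kp^{\otimes r}$, which untwists the sheaf at the cost of multiplying the induced rational map by $r$. Once this device is in place, the remainder of the proof is a routine compatibility check between the Hodge-theoretic picture (parallel transport in the Brauer family) and the Fourier--Mukai picture (twisted universal sheaves on $S_\alpha\times_{\PP^1}S$).
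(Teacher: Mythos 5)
Your proposal is correct and follows essentially the same route as the paper, which proves this proposition by exactly the preceding discussion: the twisted universal sheaf $\kp$ on $S_\alpha\times_{\PP^1}S$, the Mukai-vector Hodge isometry $T(S_\alpha)\cong T(S,\alpha)$ combined with the finite-index $B$-field identification, untwisting via the derived tensor power $\kp^{\otimes r}$ with $r={\rm ord}(\alpha)$, and $\kp\boxtimes\kp$ on $(S_\alpha\times S)^2$ for the endomorphism fields. The only imprecision is your claim that $\kp^{\otimes r}$ realizes ``$r$ times'' the isometry --- the Chern character of a tensor power is not $r$ times the Chern character --- but the paper is equally brief here, asserting only that the Chern classes of the untwisted $\kp^{\otimes r}$ eventually yield a description of the isometry by algebraic classes.
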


In \S \ref{sec:higherDimMS}
we explain that the same principle applies to projective K3 surfaces $S$
which are not elliptic by working with higher-dimensional moduli spaces. In the case of elliptic K3 surfaces one can state more algebraically that the rational Chow motives of
all algebraic fibres of the Brauer family are isomorphic: ${\mathfrak h}(\ks_t)\cong {\mathfrak h}(S)$, cf.\ \cite{HuyHC}.

\begin{remark}
(i) The first assertion in the above proposition does not hold for the other two types of families considered in this article. A weaker version of the second assertion does hold, see Proposition \ref{prop:CMBrilliant}, but a clear geometric reason for the recurrence of the same Hodge classes in all algebraic fibres is lacking.

(ii) Furthermore, for the Brauer family, we also know that if $S$ is defined
over $\bar\QQ$, also all other algebraic fibres $S_\alpha$ are defined over $\bar\QQ$.
This is unknown for the algebraic fibres of the twistor family associated to a K3 surface without complex multiplication.
\end{remark}

\subsection{}\label{ref:Twist} We conclude by a short review of twistor families which have been discussed already in the prequel \cite{HuyCM}.
To any K3 surface $S$ together with a K\"ahler class $\omega\in H^{1,1}(S,\RR)$ there is naturally associated a complex structure on $S\times\PP^1$, with the corresponding complex threefold  denoted $\ks$, such that the projection defines
a holomorphic map $\ks\to\PP^1$. The fibres $\ks_t$ are K3 surfaces with
$$H^{2,0}(\ks_t)\subset H^{2,0}(S)\oplus H^{0,2}(S)\oplus \CC\cdot\omega\subset T(S)_\CC\oplus\CC\cdot\omega.$$
The construction of $\ks\to \PP^1$, the twistor family associated with $(S,\omega)$, 
depends on the existence of the unique Ricci flat K\"ahler structure with K\"ahler class
$\omega$. In this sense, it is a transcendental construction. For more information and references see  \cite[\S 1]{HuyCM}. 

For a projective K3 surface $S$, the construction
can be applied to any ample line bundle $L$ on $S$ whose first Chern class $\ell\coloneqq {\rm c}_1(L)$ is of course a K\"ahler class. However, even in this case,
only countably many of the fibres $\ks_t$ are projective. According to \cite[Prop. 3.2]{HuyCM}, any projective fibre $\ks_t$ for which $H^{2,0}(\ks_t)\oplus H^{0,2}(\ks_t)$ does not contain $\ell$ satisfies $\rho(\ks_t)=\rho(S)$. The set of all fibres for which
$\ell$ is contained in $H^{2,0}(\ks_t)\oplus H^{0,2}(\ks_t)$ can be pictured as the equator
of the sphere $\PP^1\cong S^2$ with the original K3 surface $S$ corresponding to the north pole.

\begin{remark}\label{rem:TrivBrauer}
Inspired by the twistor construction one might try to find a $\kc^\infty$-trivialization
of the Brauer family. This is indeed possible but  not in a canonical way. Concretely, if
$\pi\colon S\to\PP^1$ is an elliptic K3 surface and $S$ is viewed as a differentiable manifold
$M$ endowed with a complex structure $I$, then a generator $\sigma$ of $H^{2,0}(S)$ is
a closed complex two-form on $M$ which uniquely determines the complex structure
$I$, for $T^{0,1}(M,I)\subset T_\CC M$ is the kernel of $\sigma\colon T_\CC M\to T_\CC^\ast M$. Now, pick any non-exact closed $(1,1)$-form $\omega$ on $\PP^1$, say with $\int\omega=1$. Then $\sigma_t\coloneqq\sigma+t\pi^\ast\omega$ is a closed complex two-form with the same properties as $\sigma$, namely $\sigma_t\wedge\sigma_t\equiv 0$ and
$\sigma_t\wedge\bar\sigma_t>0$ (pointwise), and therefore defines a complex structure
$I_t$ on $M$. Note that with respect to $I_t$ the projection $\pi$ is still holomorphic and, hence, $S_t\coloneqq (M,I_t)$ comes with a natural genus one fibration. However, the
section of $\pi\colon S\to \PP^1$ will not be holomorphic anymore with respect to $I_t$. Altogether,
this yields a family of complex structures $I_t$ on $M$ that is isomorphic
to the Brauer family. We emphasize that the construction of the family $(M,I_t)$ 
depends on the choice of $\omega$ and in this sense the Brauer family does not come with a natural trivialization. Also note that proving that
the relative Jacobian of $S_t$ is isomorphic to $S$ for all $t$ is not immediate.
\end{remark}
\section{brilliant families of Hodge structures}\label{sec:brilliantHodge}
This section deals with all purely Hodge theoretic aspects. The results can be applied
to any geometric situation that involves Hodge structures of weight two with a one-dimensional $(2,0)$-part. Later we will focus exclusively on K3 surfaces, but the Hodge theory developed here equally well applies to hyperk\"ahler manifolds.

\subsection{}\label{sec:brilliant}
We start by considering a real vector space $T_\RR$ with a non-degenerate
symmetric bilinear form $(~.~)$ of signature $(2,r-2)$. A Hodge structure of K3 type on $T_\RR$ is then given by a generator $\sigma_0\in T_\RR\otimes\CC$ of its $(2,0)$-part, unique up to scaling, that satisfies the two conditions 
$(\sigma_0.\sigma_0)=0$ and $(\sigma_0.\bar\sigma_0)>0$. Alternatively, the Hodge structure
can be thought of in terms of the oriented, positive plane $P_{\sigma_0}\coloneqq\langle{\rm Re}(\sigma_0),{\rm Im}(\sigma_0)\rangle\subset T_\RR$.

Next, we extend $T_\RR$ to the orthogonal sum
\begin{equation}\label{eqn:TRplusell}
T_\ell\coloneqq T_\RR\oplus\RR\cdot\ell,
\end{equation} where $d\coloneqq(\ell.\ell)\in\RR$
can be arbitrary. The discussions in the three cases, $d>0$, $d=0$, and $d<0$, will be similar, but there are interesting differences and  special phenomena  that we wish to explore.

For now we fix a Hodge structure on $T_\RR$ with $\sigma_0\in T_\RR\otimes\CC$ as above and consider it as a Hodge structure on $T_\ell$ by declaring $\ell$ to be of type $(1,1)$. 
We are interested in Hodge structures of K3 type on $T_\ell$, i.e.\
for which the $(2,0)$-part is spanned by a class $\sigma_t\in T_\ell\otimes\CC$ with $(\sigma_t.\sigma_t)=0$ and $(\sigma_t.\bar\sigma_t)>0$. Once again, even for $d=0$,
such a Hodge structure is uniquely determined by $\sigma_t$, for its $(1,1)$-part
is the orthogonal complement of the positive plane $P_{\sigma_t}\coloneqq\langle
{\rm Re}(\sigma_t),{\rm Im}(\sigma_t)\rangle\subset T_\RR\oplus \RR\cdot \ell$.

However, we shall  only consider Hodge structures for which 
$$\sigma_t\in \CC\cdot\sigma_0\oplus\CC\cdot\bar \sigma_0\oplus\CC\cdot\ell$$
or, equivalently, such that $P_{\sigma_t}\subset P_{\sigma_0}\oplus\RR\cdot\ell$. They
are parameterized by an open subset $D_\ell\subset Q_\ell$ of the conic $
Q_\ell\subset\PP(\CC\cdot\sigma_0\oplus\CC\cdot\bar \sigma_0\oplus\CC\cdot\ell)$
defined by $(\sigma.\sigma)=0$:
\begin{equation}\label{eqn:Dell}
D_\ell\coloneqq\{\,\sigma\in Q_\ell\mid (\sigma.\bar\sigma)>0\,\}\subset Q_\ell\subset \PP(\CC\cdot\sigma_0\oplus\CC\cdot\bar \sigma_0\oplus\CC\cdot\ell)\cong\PP^2.
\end{equation}
For $d\ne 0$ the conic $Q_\ell$ is smooth, while for $d=0$ it consists of the two conjugate lines $\PP(\CC\cdot\sigma_0\oplus\CC\cdot\ell)$ and $
\PP(\CC\cdot\bar\sigma_0\oplus\CC\cdot\ell)$
with $[\ell]$ as their point of intersection. In the latter case, we shall write
$L_\ell\coloneqq\PP(\CC\cdot\sigma_0\oplus\CC\cdot\ell)\setminus \{[\ell]\}$.
We emphasize that, although not reflected by the notation, $D_\ell$ and $Q_\ell$ depend
not only on $T_\RR$ and $\ell$, but also on $\sigma_0$.

\begin{lem}\label{lem:PeriodDomain}
Let $T_\RR\subset T_\RR\oplus\RR\cdot \ell$ be as above. Then
$$D_\ell=~Ê\begin{cases}~Q_\ell\cong \PP^1&\text{ if } d>0\\[5pt]
~L_\ell\sqcup \bar L_\ell\cong \CC\sqcup \CC&\text{ if }Ê d=0\\[5pt]
~ D_\ell'\sqcup\bar D_\ell'Ê\cong \HH\sqcup \bar\HH&\text{ if }~ d<0,
 \end{cases}$$
where the connected component $D_\ell'$ is chosen to contain $\sigma_0$.
\end{lem}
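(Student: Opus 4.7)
The plan is a direct computation in the explicit coordinates $a,b,c$ given by writing $\sigma = a\sigma_0 + b\bar\sigma_0 + c\ell$, with the three cases $d>0,\,d=0,\,d<0$ treated separately but uniformly.

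First I set up the relevant forms. Since $\ell\perp T_\RR\supset\{\sigma_0,\bar\sigma_0\}$ by the orthogonal decomposition (\ref{eqn:TRplusell}) and since $(\sigma_0.\sigma_0)=(\bar\sigma_0.\bar\sigma_0)=0$, writing $N\coloneqq(\sigma_0.\bar\sigma_0)>0$ one finds
$$(\sigma.\sigma)=2abN+c^2d,\qquad(\sigma.\bar\sigma)=(|a|^2+|b|^2)N+|c|^2d.$$
Thus $Q_\ell$ is the zero set of $2abN+c^2d$ in $\PP^2$ and $D_\ell$ is cut out inside $Q_\ell$ by the inequality $(|a|^2+|b|^2)N+|c|^2d>0$.

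Case $d>0$: the quadratic form $2abN+c^2d$ is non-degenerate, so $Q_\ell$ is a smooth conic, hence $Q_\ell\cong\PP^1$. Every term in $(\sigma.\bar\sigma)$ is then non-negative, and the only way the sum could vanish would be $a=b=c=0$, which is excluded in $\PP^2$. Hence $D_\ell=Q_\ell\cong\PP^1$.

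Case $d=0$: the equation becomes $ab=0$, so $Q_\ell$ splits as the union of the two conjugate lines $\PP(\CC\sigma_0\oplus\CC\ell)$ and $\PP(\CC\bar\sigma_0\oplus\CC\ell)$, meeting at the singular point $[\ell]$. On the line $b=0$ we have $(\sigma.\bar\sigma)=|a|^2N$, which is positive exactly when $a\ne0$, i.e.\ off $[\ell]$; by definition this open subset is $L_\ell$. The analogous computation on the conjugate line gives $\bar L_\ell$, and each is isomorphic to $\CC$ as the complement of a point in $\PP^1$. Hence $D_\ell=L_\ell\sqcup\bar L_\ell\cong\CC\sqcup\CC$.

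Case $d<0$: again $Q_\ell$ is smooth. I work in the affine chart $c=1$; the conic equation becomes $b=-d/(2Na)$ with $a\in\CC^\ast$, and the two points at $c=0$ are precisely $[\sigma_0]$ and $[\bar\sigma_0]$, both visibly in $D_\ell$ because $(\sigma_0.\bar\sigma_0)=N>0$. Substituting $b=-d/(2Na)$ into the positivity condition and setting $x=|a|^2$ gives
$$xN+\frac{d^2}{4N x}+d>0,\qquad\text{i.e.\ }\quad \left(x-\tfrac{-d}{2N}\right)^{\!2}>0,$$
so the condition is simply $|a|^2\ne -d/(2N)$. This cuts $\CC^\ast$ into two open annular regions $\{|a|^2>-d/(2N)\}$ and $\{|a|^2<-d/(2N)\}$; adding the respective limit points $[\sigma_0]$ (as $|a|\to\infty$) and $[\bar\sigma_0]$ (as $|a|\to0$) gives two open discs, hence two copies of $\HH$. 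The component $D_\ell'$ is the one containing $[\sigma_0]$.

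The only step requiring a touch of care is the last case: one has to check that the "closure at $c=0$" really adds exactly one point to each component and produces a disc rather than, say, an annulus. This is clear once one writes the circle $|a|^2=-d/(2N)$ as the common boundary of the two regions and observes that both $[\sigma_0]$ and $[\bar\sigma_0]$ belong to $D_\ell$. Everything else is a direct sign check, so I do not anticipate a genuine obstacle.
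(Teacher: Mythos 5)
Your argument is correct. For $d>0$ and $d=0$ it coincides with the paper's proof, which simply refers back to the discussion preceding the lemma: that discussion is precisely your identification of $Q_\ell$ as a smooth conic, resp.\ as the two conjugate lines through $[\ell]$, combined with the sign check $(\sigma.\bar\sigma)=(|a|^2+|b|^2)N+|c|^2d$, which for $d>0$ is positive everywhere on $\PP^2$ and for $d=0$ vanishes on each line exactly at $[\ell]$. The one place where you take a genuinely different route is $d<0$. The paper picks a real basis $e_1,e_2,e_3$ of $\langle{\rm Re}(\sigma_0),{\rm Im}(\sigma_0),\ell\rangle_\RR$ with $e_1,e_2$ an isotropic pair, $(e_1.e_2)=1$, $(e_3.e_3)=1$, and writes down the explicit tube-domain uniformization $z\mapsto[1:-z^2:\sqrt2\,z]$, which directly exhibits one component as a copy of $\HH$; you instead keep the basis $\sigma_0,\bar\sigma_0,\ell$, use $a$ as a global coordinate on $Q_\ell\cong\PP^1$ via $b=-d/(2Na)$, and show that $D_\ell$ is the complement of the circle $|a|^2=-d/(2N)$, hence two open discs, each biholomorphic to $\HH$. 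Both are complete proofs: the paper's map hands you an explicit uniformizing coordinate on $D_\ell'$ (useful for the later CM computations in the style of the prequel), whereas your version treats all three cases in a single coordinate system and makes the action of complex conjugation, which swaps the inside and outside of the separating circle and hence the two components, immediately visible. Your residual worry about the closure at $c=0$ is already resolved by your own setup: since $a$ extends to an isomorphism $Q_\ell\cong\PP^1$ sending $a=\infty$ to $[\sigma_0]$ and $a=0$ to $[\bar\sigma_0]$, the sets $\{|a|^2>-d/(2N)\}\cup\{[\sigma_0]\}$ and $\{|a|^2<-d/(2N)\}$ are literally open discs in $\PP^1$, not annuli.
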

\begin{proof} 
The cases $d>0$ and $d=0$ are covered by the above discussion. For $d<0$ observe that
$\langle{\rm Re}(\sigma_0),{\rm Im}(\sigma_0),\ell\rangle_\RR$ is isometric to $\RR^3$
with $e_1,e_2$ isotropic, $(e_1.e_2)=1$, and $(e_3.e_3)=1$. Then  $z\mapsto [1:-(z.z):\sqrt 2 z]$ describes a biholomorphic identification of $\HH$ with one of the two connected components of the period domain $D_\ell$.
\end{proof}

To restore the symmetry between the three cases, we shall shrink
the period domain  for $d>0$ to the
complement of the equator $S^1_\ell\subset D_\ell\cong\PP^1$. Recall that the equator
is described by the condition $\ell\in P_{\sigma_t}$. Hence,
$$D_\ell\setminus S^1_\ell\cong \HH\sqcup \bar\HH$$ as in the case $d<0$ and we
usually follow the convention that $\sigma_0\in \HH$. 

\begin{remark}\label{rem:equatorflow}
Note that for $d\leq 0$ every  $\sigma_t\in D_\ell$ satisfies $\ell\not\in P_{\sigma_t}$. 
In fact, if one lets
$d>0$ approach $d=0$, then the equator $S_\ell^1$ flows into the point of
intersection $[\ell]$. Thus, removing the equator allows one to view the two connected
components in each of the three cases as part of a family depending on the parameter $d=(\ell.\ell)$.

\begin{picture}(100,140)
\put(55,93){\tikz \draw[black,thick,dashed] (0,0) ellipse (1cm and 0.3cm);}
\put(55,70){\tikz \draw[black,thick] (0,0) ellipse (1cm and 0.3cm);}
\put(55,50){\tikz \draw[black,thick,dashed] (0,0) ellipse (1cm and 0.3cm);}
\put(55,101){\tikz\draw[black,thick] (0,0) arc (0:180:1cm);}
\put(55,30){\tikz\draw[black,thick] (0,0) arc (0:-180:1cm);}

\put(175.2,28.8){\tikz \draw[black, thick] (0.55,-0.55) -- (-0.5,0.3);}
\put(195.7,54.6){\tikz \draw[black, thick] (0.5,-0.5) -- (-0.5,0.3);}
\put(175.3,52.6){\tikz \draw[black, thick] (0.15,-0.55) -- (0.78,0.3);}
\put(205,29.4){\tikz \draw[black, thick] (0.1,-0.6) -- (0.78,0.3);}
\put(196,78.6){\tikz \draw[black, thick] (0.8,0.6) -- (-0.5,0.3);}
\put(186.7,105.2){\tikz \draw[black, thick] (0.86,0.6) -- (-0.5,0.3);}
\put(186.6,79.9){\tikz \draw[black, thick] (0.1,-0.6) -- (-0.16,0.3);}
\put(225.2,87){\tikz \draw[black, thick] (0.1,-0.65) -- (-0.16,0.3);}

\put(280,78){\tikz \draw [dashed] (3,1) -- (0,1);}
\put(280,38){\tikz \draw[black, thin] (0.3,-0.3) -- (-0.3,1);}
\put(295,38){\tikz \draw[black, thin] (0.3,-0.3) -- (-0.3,1);}
\put(310,38){\tikz \draw[black, thin] (0.3,-0.3) -- (-0.3,1);}
\put(325,38){\tikz \draw[black, thin] (0.3,-0.3) -- (-0.3,1);}
\put(340,38){\tikz \draw[black, thin] (0.3,-0.3) -- (-0.3,1);}
\put(355,38){\tikz \draw[black, thin] (0.3,-0.3) -- (-0.3,1);}

\put(280,81){\tikz \draw[black, thin] (-0.3,-0.3) -- (0.3,1);}
\put(295,81){\tikz \draw[black, thin] (-0.3,-0.3) -- (0.3,1);}
\put(310,81){\tikz \draw[black, thin] (-0.3,-0.3) -- (0.3,1);}
\put(325,81){\tikz \draw[black, thin] (-0.3,-0.3) -- (0.3,1);}
\put(340,81){\tikz \draw[black, thin] (-0.3,-0.3) -- (0.3,1);}
\put(355,81){\tikz \draw[black, thin] (-0.3,-0.3) -- (0.3,1);}
\put(67,10){$(\ell.\ell)>0$}
\put(180,10){$(\ell.\ell)=0$}
\put(192,76){$\circ$}
\put(310,10){$(\ell.\ell)<0$}

 \end{picture}
 
 Those are the deformations of $\sigma_0$ that interest us here.
\end{remark}

\begin{definition}\label{def:brilliantHS}
For a fixed Hodge structure of K3 type on $T_\RR$ given by $\sigma_0\in T_\RR\otimes\CC$ the family of Hodge structures $\sigma_t$ on $T_\RR\oplus\RR\cdot\ell$ satisfying $$P_{\sigma_t}\subset P_{\sigma_0}\oplus\RR\cdot \ell\text{ and }\ell\not\in P_{\sigma_t}$$ is called a \emph{brilliant family}
of deformations  of $\sigma_0$. We usually restrict to the connected component containing $\sigma_0$, the other one being obtained by complex conjugation.
\end{definition}

Occasionally, we will refer to the three types of brilliant deformations corresponding to
$d<0$, $d=0$, and $d>0$ as deformations of \emph{Dwork}, \emph{Brauer}, resp.\ \emph{twistor type}.

\subsection{}\label{sec:RatHS} In a second step, we start with a $\QQ$-vector space $T$, associate to it the real vector space
$T_\RR\coloneqq T\otimes\RR$, and assume that the Hodge structure $\sigma_0$
is irreducible. In particular, orthogonal projection yields
$\QQ$-linear injections $T\,\hookrightarrow P_{\sigma_0}$ and $T\,\hookrightarrow \CC\cdot \sigma_0$, cf.\ \cite[Lem.\ 2.3]{HuyCM}.
Furthermore, the class $\ell$ is now considered to be rational and of type $(1,1)$, so that $\sigma_0$ 
can be viewed as a Hodge structure on the $\QQ$-vector space 
\begin{equation}\label{eqn:TQplusell}
T\oplus\QQ\cdot\ell.
\end{equation} 
For a brilliant deformation $\sigma_t$ of $\sigma_0$ on $T\oplus\QQ\cdot\ell$
we denote by $$T_t\subset T\oplus \QQ\cdot\ell$$ its transcendental lattice, i.e.\
the minimal  sub-Hodge structure with non-trivial $(2,0)$-part. For example, since the original Hodge structure is assumed to be irreducible, we have $T_0=T$.
The Picard number of $\sigma_t$ is then defined  as  $$\rho(\sigma_t)\coloneqq\dim (T\oplus\QQ\cdot\ell)-\dim T_t.$$ 
We are used to think of the Picard number as the dimension of the subspace of all classes in $T\oplus\QQ\cdot\ell$ that are  $(1,1)$ with respect to $\sigma_t$. However,
due to the fact that the quadratic form is possibly non-degenerate,
this number can be bigger, cf.\ the discussion for $d=0$ in \S \ref{sec:HodgeBrauer} below.

With this definition, $\rho(\sigma_0)=1$ and 
$\rho(\sigma_t)=0$ for the very general $\sigma_t$. In fact, $\rho(\sigma_t)=1$ is the maximal value that can be attained due to the assumption on $\sigma_t$ to be brilliant.
Slightly  stronger, we have $\dim (T_t^\perp)\leq 1$ for all $t$. To see this adapt the proof of
\cite[Prop.\ 3.2]{HuyCM} to cover also the case $d<0$ and $d=0$.

The set of brilliant
deformations $\sigma_t$ of $\sigma_0$ with $\rho(\sigma_t)=1$
is countable and dense. Of particular interest to us are those for which
the transcendental lattice $T_t\subset T\oplus \QQ\cdot\ell$ has the same signature $(2,r-2)$ as $T_0=T$. Then clearly $\rho(\sigma_t)=1$ and the converse holds for $d\leq 0$.
We define the Noether--Lefschetz
locus as the following countable  dense subset of brilliant deformations of $\sigma_0$:
$${\rm NL}_\ell\coloneqq\{\,\sigma_t\mid {\rm sign}(T_t)={\rm sign}(T_0)=(2,r-2) \,\}.$$

\begin{lem}\label{lem:orthprojHodge}
For $\sigma_t\in {\rm NL}_\ell$ orthogonal projection $T_t\,\hookrightarrow T\oplus\QQ\cdot\ell\twoheadrightarrow T_0=T$ defines a bijection
$$T_t\congpf T$$
of $\QQ$-vector spaces, which for $d=0$ is an isometry of Hodge structures
 up to conjugation.
\end{lem}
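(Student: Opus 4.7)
The plan is first to observe that the signature hypothesis forces $\dim_\QQ T_t = r = \dim_\QQ T$, so the orthogonal projection $\pi$ (which is just the projection killing the $\ell$-coordinate, since $T$ and $\QQ\cdot\ell$ are mutually orthogonal summands of $T \oplus \QQ\cdot\ell$) restricts to a bijection $T_t \cong T$ as soon as its kernel $T_t \cap \QQ\cdot\ell$ is trivial. The whole argument therefore reduces to showing $\ell \notin T_t$ and, in the case $d = 0$, to checking that $\pi|_{T_t}$ respects the bilinear form and the Hodge decomposition up to conjugation.

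To rule out $\ell \in T_t$ I would split into two cases. If $d = 0$, then $\ell$ spans the radical of the form on $T \oplus \QQ\cdot\ell$, so $\ell \in T_t$ would put $\ell$ in the radical of the restricted form on $T_t$, contradicting the non-degeneracy imposed by ${\rm sign}(T_t) = (2,r-2)$. If $d \neq 0$, the ambient form is non-degenerate and $T_t$ is a non-degenerate rank-$r$ subspace, so one has an orthogonal decomposition $T \oplus \QQ\cdot\ell = T_t \oplus T_t^\perp$ with $T_t^\perp$ a line. Assuming $\ell \in T_t$, then $T_t^\perp \subset \ell^\perp = T$. Moreover, $T_t^\perp$ is a sub-Hodge structure of $T$ (as the orthogonal of the sub-Hodge structure $T_t$ with respect to a Hodge-compatible form), purely of type $(1,1)$, proper and non-zero. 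This contradicts the irreducibility of the Hodge structure on $T$ assumed in \S \ref{sec:RatHS}.

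For the second assertion, set $d = 0$. Because $\ell$ is isotropic and orthogonal to $T$, the restriction to $T_t$ of the form on $T \oplus \QQ\cdot\ell$ coincides with the pullback via $\pi$ of the form on $T$, so $\pi|_{T_t}$ is an isometry. By Lemma \ref{lem:PeriodDomain}, $D_\ell$ decomposes as $L_\ell \sqcup \bar L_\ell$. For $\sigma_t \in L_\ell$ one may write $\sigma_t = \alpha \sigma_0 + \gamma \ell$, so $\pi(\sigma_t) = \alpha \sigma_0$ spans the $(2,0)$-part of $T$ and $\pi|_{T_t}$ is a Hodge isometry. For $\sigma_t \in \bar L_\ell$ one has instead $\pi(\sigma_t) = \beta \bar\sigma_0$, so $\pi|_{T_t}$ becomes a Hodge isometry only after composing with the complex conjugation that exchanges $\sigma_0$ and $\bar\sigma_0$.

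I expect the main obstacle to be the case $d \neq 0$ of the injectivity step, since this is the only point where the irreducibility hypothesis on $\sigma_0$ is genuinely needed; the degenerate case $d = 0$ and the isometry/Hodge-compatibility statements follow formally once the splitting of $D_\ell$ in Lemma \ref{lem:PeriodDomain} is invoked.
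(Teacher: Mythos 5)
Your reduction to the injectivity of the projection, your treatment of $d=0$ via the radical, and the isometry statement at the end are all fine, and your route for $d\neq 0$ (via the orthogonal line $T_t^\perp$ and irreducibility) is genuinely different from the paper's, which instead shows $P_{\sigma_t}=T_{t,\RR}\cap(P_{\sigma_0}\oplus\RR\cdot\ell)$ and then rules out $\ell\in P_{\sigma_t}$ by positive definiteness (for $d\le 0$) or by the brilliancy hypothesis (for $d>0$). However, your $d\neq 0$ argument has a genuine gap at the step ``$T_t^\perp$ is a sub-Hodge structure of $T$, purely of type $(1,1)$.'' The line $T_t^\perp=\QQ\cdot v$ is of type $(1,1)$ \emph{with respect to $\sigma_t$}; to contradict the irreducibility of $(T,\sigma_0)$ you need $v$ to be of type $(1,1)$ \emph{with respect to $\sigma_0$}, i.e.\ $(v.\sigma_0)=0$. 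Writing $\sigma_t=a\sigma_0+b\bar\sigma_0+c\ell$ and using $v\perp\ell$, the condition $(v.\sigma_t)=0$ only gives $a(v.\sigma_0)+b\,\overline{(v.\sigma_0)}=0$, which forces $(v.\sigma_0)=0$ precisely when $|a|\neq|b|$ --- and $|a|=|b|$ is exactly the condition $\ell\in P_{\sigma_t}$, i.e.\ the equator for $d>0$.

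This is not a cosmetic omission: for $d>0$ a period point on the equator can have $\ell\in T_t$ with ${\rm sign}(T_t)=(2,r-2)$, in which case the projection $T_t\to T$ is \emph{not} injective; the lemma only holds because points of ${\rm NL}_\ell$ are by definition brilliant, so $\ell\notin P_{\sigma_t}$. Since your argument never invokes brilliancy for $d\neq 0$, it would ``prove'' the statement in a situation where it fails, which is the sign of a real gap. The fix is short: for $d<0$, $\ell\notin P_{\sigma_t}$ is automatic because $P_{\sigma_t}$ is positive definite and $(\ell.\ell)<0$; for $d>0$ it is the brilliancy assumption; in either case $|a|\neq|b|$ and then your irreducibility argument closes. (Alternatively, once you know $\ell\notin P_{\sigma_t}$ you can skip $T_t^\perp$ altogether and argue, as the paper does, that $\ell\in T_t$ would force $\ell\in T_{t,\RR}\cap(P_{\sigma_0}\oplus\RR\cdot\ell)=P_{\sigma_t}$.)
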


\begin{proof} First observe that for $\rho(\sigma_t)=1$ the obvious
inclusion $P_{\sigma_t}\subset T_t\cap (P_{\sigma_0}\oplus \RR\cdot \ell)$  is an equality.
Next, since $T_t$ and $T_0$ are $\QQ$-vector spaces of the same dimension, it suffices to prove the injectivity of $T_t\to T$, which follows from $\ell\not\in T_t$ or, equivalently,
$\ell\not\in P_{\sigma_t}=T_t\cap (P_{\sigma_0}\oplus \RR\cdot\ell)$. 
Indeed, as $P_{\sigma_t}$ is positive definite, we clearly have
$\ell\not\in P_{\sigma_t}$ for $d\leq0$. For  $d>0$ it is implied by the 
assumption that $\sigma_t$ is  brilliant deformations. 

 For the second assertion in the case $d=0$ use that $\sigma_t=\sigma_0+ t\ell\in L_\ell$ or $\sigma_t=\bar\sigma_0+t\ell\in \bar L_\ell$ for some $t\in \CC$. Hence, the projection $T_t\to T$ maps $\sigma_t$ to $\sigma_0$ or $\bar\sigma_0$. 
 As for $d=0$ the set of $(1,1)$-classes is the same for all $\sigma_0+t\ell$, see
 \S \ref{sec:HodgeBrauer} below, also $(1,1)$-classes are preserved under the projection.
 \end{proof}

\subsection{}\label{sec:HodgeBrauer}
Let us now study the case of an isotropic class $\ell$, i.e.\ $d=0$. 
We will restrict to the connected component $L_\ell$, i.e.\ we only study brilliant deformations of the form $\sigma_t=\sigma_0+t\ell$ for some $t\in \CC$. Those parameterized by $\bar L_\ell$ are obtained by complex conjugation.

The first thing to observe is that, unlike the case $d\ne0$, the class $\ell$ is
always of type $(1,1)$, for $(\sigma_t.\ell)=0$. However, 
for the very general $\sigma_t\in L_\ell$, one has $T_t=T\oplus\QQ\cdot \ell$, cf.\ Lemma \ref{lem:NLBrauer}, and, therefore, in spite of being of type $(1,1)$ for $\sigma_t$, the class $\ell$ is contained in the transcendental lattice $T_t$. More generally,
the space of $(1,1)$-classes is the same for all $\sigma_t\in L_f$.

Next, any $\sigma_t\in L_\ell$  can be written
as $$\sigma_t=\sigma_0+(\sigma_0.B)\ell$$ for some $B\in T\otimes\CC$. Here,
the class $B$ is not uniquely defined, only its $(0,2)$-part $B^{0,2}\in T^{0,2}$ is.
This yields an identification
\begin{equation}\label{eqn:LlT02}
L_\ell\congpf T^{0,2},
\end{equation}
which is independent of the choice of $\sigma_0$.
The following description of the image of the Noether--Lefschetz locus  under 
(\ref{eqn:LlT02}) views the irreducible Hodge structure $T$ as a $\QQ$-linear
subspace of $T^{0,2}$ via orthogonal projection.

\begin{lem}\label{lem:NLBrauer}
A point $t\in L_\ell$ is contained in the Noether--Lefschetz locus ${\rm NL}_\ell$
if and only if there exists a class $B\in T$ (in the $\QQ$-vector space!) with $\sigma_t=\sigma_0+(\sigma_0.B)\ell$. In other words, under the identification
(\ref{eqn:LlT02}),
$${\rm NL}_\ell\cong T.$$
\end{lem}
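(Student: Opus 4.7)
The plan is to exhibit, for each rational $B \in T$, an explicit $\QQ$-linear isometry $\phi_B$ of $T \oplus \QQ \cdot \ell$ carrying $\sigma_0$ to $\sigma_0 + (\sigma_0.B)\ell$, and then to identify the image $\phi_B(T)$ with the transcendental lattice $T_t$. For the converse, the graph description of $T_t$ forced by Lemma \ref{lem:orthprojHodge} lets me read off a rational $B$ from any $\sigma_t \in {\rm NL}_\ell$.

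For the ``if'' direction, given $B \in T$ I define
\[
\phi_B(v) \coloneqq v + (v.B)\,\ell \quad (v \in T), \qquad \phi_B(\ell) \coloneqq \ell,
\]
and extend $\QQ$-linearly. Since $\ell$ is isotropic and $\ell \perp T$, a short bilinear computation confirms that $\phi_B$ is a $\QQ$-isometry. It sends $\sigma_0$ to $\sigma_t = \sigma_0 + (\sigma_0.B)\ell$; using $T^{1,1} \perp T^{2,0}$ one checks that $\phi_B$ also carries $T^{1,1}$ into the $(1,1)$-part of $(T \oplus \QQ \cdot \ell, \sigma_t)$, so $\phi_B(T)$ is a rational sub-Hodge structure of signature $(2,r-2)$ containing $\sigma_t$. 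Because $(T,\sigma_0)$ is irreducible, its push-forward to $\phi_B(T)$ is irreducible as well, forcing $T_t = \phi_B(T)$; consequently $\mathrm{sign}(T_t) = (2, r-2)$ and $\sigma_t \in {\rm NL}_\ell$.

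For the ``only if'' direction, suppose $\sigma_t \in {\rm NL}_\ell$. Lemma \ref{lem:orthprojHodge} says that orthogonal projection $T_t \twoheadrightarrow T$ is a $\QQ$-isomorphism, so in particular $T_t \cap \QQ \cdot \ell = 0$ and $T_t$ is the graph of a $\QQ$-linear functional $\lambda \colon T \to \QQ$. Non-degeneracy of the bilinear form on $T$ yields a unique rational $B \in T$ with $\lambda(v) = (v.B)$ for all $v \in T$, whence $T_t = \phi_B(T)$. The $(2,0)$-part of $T_t$ is spanned by $\phi_B(\sigma_0) = \sigma_0 + (\sigma_0.B)\ell$; since every class in $L_\ell$ has a unique representative with $\sigma_0$-coefficient $1$, this recovers $\sigma_t = \sigma_0 + (\sigma_0.B)\ell$ exactly. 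The identification ${\rm NL}_\ell \cong T$ then follows because under (\ref{eqn:LlT02}) the map $B \mapsto \sigma_0 + (\sigma_0.B)\ell$ becomes the natural projection $T \to T^{0,2}$, $B \mapsto B^{0,2}$, which is injective: its kernel consists of rational $(1,1)$-classes in $T$, of which there are none since $(T,\sigma_0)$ is an irreducible Hodge structure of K3 type. The main subtle step is this reverse direction, where Lemma \ref{lem:orthprojHodge} is indispensable---without the rank and graph properties it supplies, there would be no handle at all on producing a rational $B$.
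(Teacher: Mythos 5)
Your proof is correct and follows essentially the same route as the paper: the forward direction via the isometric embedding $\alpha\mapsto\alpha+(\alpha.B)\ell$ whose image is a sub-Hodge structure forced to equal $T_t$ (the paper uses minimality plus $\rho(\sigma_t)\leq 1$ where you invoke irreducibility of the pushed-forward Hodge structure, an equivalent step), and the converse via Lemma \ref{lem:orthprojHodge} together with non-degeneracy of the form on $T$ to realize the graph functional as $(\,\cdot\,.B)$. Your added remark on the injectivity of $B\mapsto B^{0,2}$ is a correct elaboration of what the paper leaves implicit.
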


\begin{proof} For any $B\in T$, the map $f_B\colon T\to T\oplus\QQ\cdot \ell$, $\alpha\mapsto \alpha+(\alpha.B)\ell$ defines a $\QQ$-linear isometric embedding
with $\sigma_t=\sigma_0+(\sigma_0.B)\ell\in f_B(T)\otimes \CC$ and, moreover, its image $f_B(T)\subset T\oplus \QQ\cdot\ell$ is a sub-Hodge structure with respect to $\sigma_t$.
Thus, $T_t\subset f_B(T)$ by minimality of $T_t$, which combined with $\rho(\sigma_t)\leq 1$ shows that
$T_t=f_B(T)$ is of signature $(2,r-2)$, i.e.\ $t\in {\rm NL}_\ell$.

Conversely, if $t$ is contained in the Noether--Lefschetz locus, then by virtue of Lemma
\ref{lem:orthprojHodge}, orthogonal projection yields an isomorphism $T_t\congpf T$. Its inverse is
a map $T\to T\oplus \QQ\cdot\ell$, $\alpha\mapsto \alpha+g(\alpha)\ell$ for some
linear map $g\colon T\to \QQ$. As $(~.~)$ is non-degenerate on $T$, 
there exists a (non-unique) $B\in T$ with $g(\alpha)=(\alpha.B)$ for all $\alpha$.
\end{proof}

If $T$ is associated with an integral Hodge structure, i.e.\ an integral lattice
$T_\ZZ\subset T$ with $T_\ZZ\otimes\QQ\cong T$ is fixed,
then its Brauer group can be defined and it comes in two flavors.
In the following definition we again use the injectivity of the orthogonal
projection $T\to T^{0,2}$ which allows us to view both, the $\ZZ$-module
$T_\ZZ$ and the bigger $\QQ$-vector space $T$, as 
subgroups of $T^{0,2}$.

\begin{definition}
 The \emph{analytic Brauer group} of an irreducible integral Hodge structure $T_\ZZ$ of K3 type as above is the quotient $${\rm Br}^{\rm an}(T_\ZZ)\coloneqq T^{0,2}/T_\ZZ.$$
 The \emph{algebraic Brauer group} ${\rm Br}(T_\ZZ)\subset {\rm Br}^{\rm an}(T_\ZZ)$
 is defined as the subgroup of all torsion elements or, equivalently, $${\rm Br}(T_\ZZ)\coloneqq T/T_\ZZ\subset T^{0,2}/T_\ZZ={\rm Br}^{\rm an}(T_\ZZ).$$
\end{definition}

Then Lemma \ref{lem:NLBrauer} expresses a relation between the
Noether--Lefschetz locus and the Brauer group:
$$\xymatrix@R=18pt{~~Ê~~{\rm NL}_\ell\ar@<-6ex>@{->>}[d]\ar@<5ex>@{->>}[d]~~\subset~ ~L_\ell\cong T^{0,2}\\
{\rm Br}(T_\ZZ)\subset{\rm Br}^{\rm an}(T_\ZZ)}$$

or, in other words,
$${\rm NL}_\ell/T_\ZZ\cong {\rm Br}(T_\ZZ),$$
where ${\rm NL}_\ell$ is considered as an affine space over $T_\ZZ$.

\subsection{}\label{sec:TwBrHs} There are various similarities between the three cases,
$d<0$, $d=0$, and $d>0$ and also a procedure to combine them.

\begin{remark}\label{rem:AO}
The most notable difference between the three cases is the signature of
$T\oplus\QQ\cdot\ell$. To simplify things, assume $T$ is two-dimensional. Then
$T\oplus\QQ\cdot\ell$ is positive definite for $d>0$, of signature $(2,1)$ for $d<0$,
and degenerate with one isotropic and two positive directions for $d=0$. This not only has consequences
for the description of the corresponding period domains as in Lemma \ref{lem:PeriodDomain}, but also for the possible existence of algebraic quotients.

To make this more transparent, assume 
that we start with an integral Hodge structure $T_\ZZ$, let
$T=T_\ZZ\otimes \QQ$, and assume $d=(\ell.\ell)\in \ZZ$. Then the
orthogonal group $\Gamma\coloneqq{\rm O}(T_\ZZ\oplus \ZZ\cdot \ell)$ of the
lattice $T_\ZZ\oplus\ZZ\cdot \ell$ is a finite group for $d>0$ and in many cases essentially trivial. On the other hand, for brilliant families of Dwork type, so $d<0$,
the group $\Gamma$ is infinite and $D_\ell/\Gamma$ will be the quotient of
 $\HH$ by a modular group. Often, as in the case of the classical Dwork family, a
 family given over $D_\ell$ will descend to an algebraic family of the quotient of
 $D_\ell$ by some subgroup of $\Gamma$ of finite index.
 
Something special happens in the case $d=0$. Here, $\Gamma$ is again an infinite group. Indeed, $\Gamma$ contains the $B$-field shifts $\alpha+n\ell\mapsto \alpha+((\alpha.B)+n)\ell$ for any $B\in T_\ZZ$. Note that this also works when $\dim T>2$
and that under the identification $D_\ell=L_\ell\cong T^{0,2}\cong\CC$ the action corresponds to translation by elements in $T_\ZZ\subset T^{0,2}$. However, unlike the case
$d<0$, the quotient $D_\ell/\Gamma$ usually does not have any reasonable
geometric structure.
\end{remark}

The three types of brilliant deformations can be combined as follows. 
We extend the Hodge structure on $T$ given by $\sigma_0$ to $$T_{\ell_1,\ell_2}\coloneqq T\oplus\QQ\cdot \ell_1\oplus\QQ\cdot \ell_2$$
by declaring both classes $\ell_1$ and $\ell_2$ to be of type $(1,1)$.
We assume $d\coloneqq (\ell_1.\ell_1)>0$, $(\ell_1.\ell_2)=0$, and $(\ell_2.\ell_2)=-d$. In particular, $f\coloneqq
\ell_1+\ell_2$ is an isotropic class. Analogously to $D_\ell\subset Q_\ell$, one defines $$D_{\ell_1,\ell_2}\subset Q_{\ell_1,\ell_2}\subset\PP(\CC\cdot\sigma_0\oplus\CC\cdot\bar\sigma_0\oplus\CC\cdot\ell_1\oplus\CC\cdot\ell_2)$$ by the conditions $(\sigma.\sigma)=0$
and $(\sigma.\bar\sigma)>0$. Clearly, the three types of period domains
$D_\ell$ with $(\ell.\ell)>0$, $=0$, and $<0$ considered
previously reappear as
curves $$D_{\ell_1},D_{\ell_2}, D_f\subset D_{\ell_1,\ell_2}.$$
More generally, for any rational (or real) linear combination $\ell=c_1\ell_1+c_2\ell_2$
the associated period domain $D_\ell$ describes a curve in $D_{\ell_1,\ell_2}$.
Note that if an arbitrary $\sigma\in D_{\ell_1,\ell_2}$ is written as $\sigma=a\sigma+b\bar\sigma+(c_1\ell_1+c_2\ell_2)$, then $\sigma$ can be viewed as a point in the one-dimensional
family of deformations $D_\ell$ with $\ell={c_1\ell_1+c_2\ell_2}$ as in \S \ref{sec:brilliant}, but typically $\ell=c_1\ell_1+c_2\ell_2$ is not even real.
In this context one would again say that $\sigma$ is a brilliant deformation of
$\sigma_0$ whenever $P_\sigma$ intersects $\RR\cdot\ell_1\oplusÊ\RR\cdot\ell_2$ trivially. In Remark \ref{rem:equatorflow} we have noted already that the equators
$S^1_{\ell_s}\subset D_{\ell_s}$ for $\ell_s=\ell_1+s\ell_2$ with $s\in [0,1)$ specialize
to the point $[f]\in Q_f$ for $s$ approaching $1$.

Putting the brilliant deformations associated with varying classes $\ell=c_1\ell_1+c_2\ell_2$
into one family shows the discrete Noether--Lefschetz loci ${\rm NL}_\ell$
as part of another continuous family. To make this precise, consider
$\ell'\in T\oplus\QQ\cdot\ell_1\oplus\QQ\cdot\ell_2$ not contained
in $\QQ\cdot\ell_1\oplus\QQ\cdot\ell_2$ such that
\begin{equation}\label{eqn:condlprime}
(\ell'.\ell')>0~\text{ and } ~(\ell'.f)>0.
\end{equation}
Then $D_{\ell_1,\ell_2}\cap\ell'^\perp\subset D_{\ell_1,\ell_2}$
is a one-dimensional family of deformations $\sigma$ of $\sigma_0$ 
for which $\ell'$ is of type $(1,1)$.

\begin{prop}\label{prop:NL}
For any $\ell'$ satisfying (\ref{eqn:condlprime}) and any rational
$\ell=c_1\ell_1+c_1\ell_2$, the curve $D_{\ell_1,\ell_2}\cap\ell'^\perp$ intersects the
family of brilliant deformations parameterized by
$D_\ell$ in the Noether--Lefschetz locus.
Conversely, if $\sigma_t\ne\sigma_0$ is a brilliant deformation contained in ${\rm NL}_\ell\subset D_\ell$, then $\sigma_t\in \ell'^\perp$ for an appropriate $\ell'$ as above.
\end{prop}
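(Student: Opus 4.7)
The strategy is to exploit the orthogonal direct sum decomposition
\begin{equation*}
T_{\ell_1,\ell_2}=(T\oplus\QQ\cdot\ell)\oplus\QQ\cdot e,
\end{equation*}
where $e\in\QQ\cdot\ell_1\oplus\QQ\cdot\ell_2$ is a non-zero vector orthogonal to $\ell$; an explicit choice is $e=c_2\ell_1+c_1\ell_2$, for which a direct computation gives $(e.e)=-(\ell.\ell)$ and $(e.f)=-(\ell.f)$. Since $e$ is automatically orthogonal to $T$, the crucial observation is that every $\sigma\in D_\ell$ lies in $\CC\cdot\sigma_0\oplus\CC\cdot\bar\sigma_0\oplus\CC\cdot\ell\subset(T\oplus\QQ\cdot\ell)\otimes\CC$ and therefore satisfies $(\sigma.e)=0$. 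This reduces every orthogonality condition on $\sigma$ against a class of $T_{\ell_1,\ell_2}$ to a condition inside the smaller space $T\oplus\QQ\cdot\ell$, where the formalism of \S\ref{sec:brilliant}--\S\ref{sec:RatHS} applies directly.

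For the first assertion, take $\sigma\in D_\ell\cap\ell'^\perp$ and write $\ell'=\ell'_0+b\,e$ with $\ell'_0\in T\oplus\QQ\cdot\ell$. The observation above gives $(\ell'_0.\sigma)=(\ell'.\sigma)=0$, so $\ell'_0$ is a $(1,1)$-class for the brilliant deformation $\sigma$ on $T\oplus\QQ\cdot\ell$. Because $e\in T^\perp$, the $T$-component of $\ell'_0$ agrees with that of $\ell'$, which is non-zero by the hypothesis $\ell'\notin\QQ\cdot\ell_1\oplus\QQ\cdot\ell_2$; in particular $\ell'_0\ne 0$. Together with the bound $\rho(\sigma)\le 1$ for brilliant deformations from \S\ref{sec:RatHS}, this forces $\rho(\sigma)=1$ and identifies $T_t=(\ell'_0)^\perp\subset T\oplus\QQ\cdot\ell$. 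The transcendental lattice $T_t$ contains the positive $2$-plane $P_\sigma$; by minimality together with the semisimplicity of polarized Hodge structures, $T_t$ cannot split off a non-isotropic $(1,1)$-class. Using $(\ell'_0.\ell'_0)=(\ell'.\ell')+b^2(\ell.\ell)$ and the positivity hypotheses $(\ell'.\ell')>0$ and $(\ell'.f)>0$, a case analysis on the sign of $(\ell.\ell)$ (the case $(\ell.\ell)=0$ being covered by Lemma~\ref{lem:NLBrauer}) then pins the signature of $T_t$ to $(2,r-2)$, so $\sigma\in{\rm NL}_\ell$.

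For the converse, let $\sigma_t\in{\rm NL}_\ell\setminus\{\sigma_0\}$ and pick a generator $\mu\in T\oplus\QQ\cdot\ell$ of the one-dimensional space of $(1,1)$-classes orthogonal to $\sigma_t$. Writing $\mu=\tau+a\ell$ with $\tau\in T$, one has $\tau\ne 0$: otherwise $\ell\perp\sigma_t$ would force $\sigma_t\in\CC\cdot\sigma_0\oplus\CC\cdot\bar\sigma_0$, and inspection of the conic $(\sigma.\sigma)=0$ on this positive $2$-plane, in the fixed connected component containing $\sigma_0$, would give $\sigma_t=\sigma_0$. Now set $\ell'=c\mu+b\,e\in T_{\ell_1,\ell_2}$; by construction $\sigma_t\perp\ell'$, and the non-zero $T$-component $c\tau$ gives $\ell'\notin\QQ\cdot\ell_1\oplus\QQ\cdot\ell_2$. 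Using $(\mu.e)=0$, the two positivity conditions read $c^2(\mu.\mu)+b^2(e.e)>0$ and $(ca-b)(\ell.f)>0$; since $(e.e)=-(\ell.\ell)$ and $(\mu.\mu)$ have opposite signs, and $(\ell.f)\ne 0$ whenever $(\ell.\ell)\ne 0$, a common rational solution $(c,b)$ with $c\ne 0$ exists. The main obstacle in the argument is the signature bookkeeping in the forward direction, where the given positivity on $\ell'$ must be transported through the $b$-shift to the correct sign of $(\ell'_0.\ell'_0)$ relative to the ambient signature of $T\oplus\QQ\cdot\ell$; once this is handled case by case, the inclusion $P_\sigma\subset T_t$ together with minimality pins the signature down to $(2,r-2)$.
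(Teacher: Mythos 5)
Your decomposition $T_{\ell_1,\ell_2}=(T\oplus\QQ\cdot\ell)\oplus\QQ\cdot e$ with $e=c_2\ell_1+c_1\ell_2$ is the paper's device in different clothing: $e$ is exactly the generator $\ell''$ of $\ell^\perp\cap(\QQ\cdot\ell_1\oplus\QQ\cdot\ell_2)$ used there, and for $(\ell.\ell)\ne0$ your forward and converse arguments track the paper's. On the point you flag as ``the main obstacle'': the way to finish the forward direction when $(\ell.\ell)<0$ is not semisimplicity of polarized Hodge structures but simply that $\ell_0'^{\perp}\cap(T\oplus\QQ\cdot\ell)$ contains the positive $2$-plane $P_\sigma$; since the ambient has signature $(2,r-1)$, a hyperplane $\ell_0'^{\perp}$ with $(\ell'_0.\ell'_0)>0$ or $(\ell'_0.\ell'_0)=0$ carries at most one positive direction modulo its radical, so necessarily $(\ell'_0.\ell'_0)<0$ and $T_t=\ell_0'^{\perp}$ is nondegenerate of signature $(2,r-2)$. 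Combined with $\dim T_t^\perp\le1$ this closes that case, and for $(\ell.\ell)>0$ your computation $(\ell'_0.\ell'_0)=(\ell'.\ell')+b^2(\ell.\ell)>0$ already suffices.

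The genuine gap is the converse for $(\ell.\ell)=0$, which the statement includes and which is the case the paper actually needs (the Brauer family $D_f$ enters Corollary \ref{cor:HT04}). There your whole framework degenerates: $e=c_2\ell_1+c_1\ell_2$ becomes a multiple of $\ell$, the form on $T\oplus\QQ\cdot\ell$ has radical $\QQ\cdot\ell$, and the space of rational $(1,1)$-classes orthogonal to $\sigma_t$ is exactly $\QQ\cdot\ell$ for \emph{every} $\sigma_t\in L_\ell$. Hence your generator $\mu$ is forced to be $\ell$ and has $\tau=0$; the justification you give for $\tau\ne0$ (that $\ell\perp\sigma_t$ would force $\sigma_t\in\CC\cdot\sigma_0\oplus\CC\cdot\bar\sigma_0$) is false here because $\ell\perp\sigma_t$ holds automatically. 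Your $\ell'=c\mu+b\,e$ then lies in $\QQ\cdot\ell_1\oplus\QQ\cdot\ell_2$, violating (\ref{eqn:condlprime}). The correct construction goes through Lemma \ref{lem:NLBrauer}: write $T_t=f_B(T)$ with $B\in T$ nonzero (here $\sigma_t\ne\sigma_0$ is used) and take $\ell'=-B+(1/d)\ell_1+kf$ with $k\gg0$, which is orthogonal to $T_t$, satisfies $(\ell'.f)=1$ and $(\ell'.\ell')>0$, and is not in $\QQ\cdot\ell_1\oplus\QQ\cdot\ell_2$. Relatedly, your forward direction for $(\ell.\ell)=0$ is only a citation of Lemma \ref{lem:NLBrauer}; one still has to produce the class $B\in T$ from $(\sigma_t.\ell')=0$, namely $B=-(1/(\ell'.f))\,\bar\ell'$ with $\bar\ell'$ the $T$-component of $\ell'$.
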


\begin{proof} Let $\sigma_t\in D_\ell$ be a brilliant deformation of $\sigma_0$ orthogonal
to $\ell'$. If $(\ell.\ell)>0$, then $\sigma_t$  is contained in the orthogonal complement of the plane $\langle\ell',\ell''\rangle_\QQ$, where $\ell''$ is a generator of  $\ell^\perp\subset\QQ\cdot\ell_1\oplus\QQ\cdot\ell_2$. As ${\rm sign}\langle\ell',\ell''\rangle_\QQ=(1,1)$ and $\sigma_t$ is brilliant, $T_t$ is of signature
$(2,r-2)$ and, therefore, $\sigma\in {\rm NL}_\ell$.
For $(\ell.\ell)=0$, i.e.\ $\ell$ a multiple of $f=\ell_1+\ell_2$ or 
$\ell_1-\ell_2$, we apply Lemma \ref{lem:NLBrauer}. For example, if
$\sigma_t=\sigma_0+t f$, then it suffices to show that $t=(\sigma_0.B)$
for some $B\in T$.
However, $(\sigma_t.\ell')=0$ and $(\ell'.f)\ne0$ allows us to write $t=-(\sigma_0.\ell')/(\ell'.f)$. 
Then let $B\coloneqq -(1/(\ell'.f))\bar\ell'$, where $\bar\ell'$ is the image of
$\ell'$ under the projection onto $T$.

Eventually, for $(\ell.\ell)<0$ the transcendental lattice $T_t$ is contained in
$T\oplus \QQ\cdot\ell$, which is of signature $(2,r-1)$. Since 
by assumption $\ell'$ is 
 not contained in $\QQ\cdot\ell_1\oplus\QQ\cdot\ell_2$, the transcendental lattice $T_t$ is of dimension $r$
and using $P_{\sigma_t}\subset T_t\otimes\RR$ its signature must be $(2,r-2)$.
Hence, also in this case, $\sigma_t\in {\rm NL}_\ell$.

To prove the converse, recall that $\sigma_t$ is contained in the Noether--Lefschetz locus if its transcendental lattice $T_t\subset T\oplus\QQ\cdot\ell$ has signature $(2,r-2)$. For $(\ell.\ell)>0$, its orthogonal complement $T_t^\perp$ is spanned by some $\ell'\in T\oplus\QQ\cdot\ell$ with $(\ell'.\ell')>0$, which , after changing its sign if necessary, also satisfies
$(\ell'.f)>0$. For $(\ell.\ell)<0$, one changes a generator of the orthogonal
complement $T_t^\perp\subset T\oplus\QQ\cdot\ell$ by a large multiple of a
class in $\ell^\perp\subset\QQ\cdot\ell_1\oplus\QQ\cdot\ell_2$ to ensure $(\ell'.\ell')>0$.
The condition $(\ell'.f)>0$ can then be achieved by a sign change if necessary.
If $(\ell.\ell)=0$, then $\ell$ is a multiple of $\ell_1\pm\ell_2$. We shall assume
$\ell=f=\ell_1+\ell_2$, the case $\ell=\ell_1-\ell_2$ is similar. Then the transcendental lattice is the image of $f_B$ for some $B\in T$, see 
Lemma \ref{lem:NLBrauer}. In other words, $T_t$ is orthogonal to any element of the form $\ell'\coloneqq -B+(1/d)\ell_1+kf$ with $(\ell'.f)>0$ and for $k\gg0$ one has $(\ell'.\ell')>0$. In all three cases, the assumption $\sigma_t\ne\sigma_0$ is needed to ensure that
$\ell'$ is not contained in $\QQ\cdot\ell_1\oplus\QQ\cdot\ell_2$.
\end{proof}

Rephrasing the above discussion yields the Hodge theory underlying Theorem \ref{thm:comp}. 

\begin{cor}\label{cor:HT04}
Consider a brilliant twistor deformation $\sigma_t\in D_{\ell_1}$ of $\sigma_0$ that is contained in ${\rm NL}_{\ell_1}$. Then there exists a class $\ell'\in T\oplus\QQ\cdot\ell_1\oplus\QQ\cdot\ell_2$ satisfying (\ref{eqn:condlprime})
and such that the curve $D_{\ell_1,\ell_2}\cap\ell'^\perp$ intersects
the Brauer family $L_f\subset D_f$ in a point in the Noether--Lefschetz locus, i.e.\
in a point of the form $\sigma_0+(\sigma_0.B)f$ for some $B\in T$.
\end{cor}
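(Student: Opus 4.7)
My plan is to deduce the corollary by a direct combination of the converse direction of Proposition \ref{prop:NL} with the parameterization of the Brauer Noether--Lefschetz locus given by Lemma \ref{lem:NLBrauer}. The entire argument is essentially two lines of linear algebra once those ingredients are in place.

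The first step is to produce the candidate $\ell'$. I would apply the converse direction of Proposition \ref{prop:NL} with $\ell=\ell_1$ to $\sigma_t\in {\rm NL}_{\ell_1}$, obtaining some $\ell'\in T\oplus\QQ\cdot\ell_1$ satisfying (\ref{eqn:condlprime}) and $\sigma_t\in\ell'^\perp$. This same $\ell'$, viewed inside the larger ambient space $T\oplus\QQ\cdot\ell_1\oplus\QQ\cdot\ell_2$, will be the class required by the corollary; by construction the curve $D_{\ell_1,\ell_2}\cap\ell'^\perp$ already passes through $\sigma_t$.

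The second step is to intersect this curve with $L_f$ and identify the intersection point explicitly. Parameterizing $L_f$ as $[\sigma_0+tf]$ with $t\in\CC$, the condition of lying in $\ell'^\perp$ reduces to $(\sigma_0.\ell')+t(f.\ell')=0$. Since $(f.\ell')>0$ by (\ref{eqn:condlprime}), this equation has a unique finite solution $t_0=-(\sigma_0.\ell')/(f.\ell')\in\CC$; the resulting point $[\sigma_0+t_0f]$ is a genuine element of $L_f$, since $t_0$ is finite (ruling out the excluded point $[f]$) and the defining conditions of $D_f$ are automatic from $(\sigma_0.\sigma_0)=(\sigma_0.f)=(f.f)=0$ together with $(\sigma_0.\bar\sigma_0)>0$.

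It remains to verify that $[\sigma_0+t_0f]$ lies in the Noether--Lefschetz locus ${\rm NL}_f$. By Lemma \ref{lem:NLBrauer} this amounts to finding $B\in T$ with $t_0=(\sigma_0.B)$. Writing $\ell'=\bar\ell'+a\ell_1$ with $\bar\ell'\in T$ and $a\in\QQ$, one computes $(\sigma_0.\ell')=(\sigma_0.\bar\ell')$ (since $T\perp\QQ\cdot\ell_1$) and $(f.\ell')=ad$ (with $d=(\ell_1.\ell_1)>0$), using the relations $(\ell_1.\ell_1)=d$, $(\ell_2.\ell_2)=-d$, $(\ell_1.\ell_2)=0$. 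Hence $B\coloneqq -\bar\ell'/(ad)\in T$ does the job, the denominator being nonzero because $(f.\ell')>0$ forces $a\ne 0$. I do not anticipate a significant obstacle; the only care needed is to ensure $(f.\ell')\ne 0$ so that the intersection is finite, which is exactly what condition (\ref{eqn:condlprime}) guarantees.
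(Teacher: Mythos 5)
Your proposal is correct and follows essentially the same route as the paper: obtain $\ell'$ from the converse direction of Proposition \ref{prop:NL} applied with $\ell=\ell_1$, then exhibit the intersection point of $\ell'^\perp$ with $L_f$ as $\sigma_0+(\sigma_0.B)f$ with $B$ the $T$-component of $-(1/(f.\ell'))\ell'$, which lies in ${\rm NL}_f$ by Lemma \ref{lem:NLBrauer}. Your version merely spells out the computation of the intersection point and of $B$ a bit more explicitly than the paper does.
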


\begin{proof}
By virtue of the proposition, $\sigma_t\in \ell'^\perp$ for some $\ell'$ satisfying (\ref{eqn:condlprime}). Then $\sigma\coloneqq \sigma_0+(\sigma_0.B)f$, where
$B$ is the projection of $-(1/(f.\ell'))\ell'$ in $T$, is a brilliant deformation of $\sigma_0$ in the Brauer family. Moreover, $\sigma$ is contained in the Noether--Lefschetz locus 
${\rm NL}_f$ and, by construction, it is orthogonal to $\ell'$, which proves the assertion.
\end{proof}

\subsection{} 
As before, we consider an irreducible rational Hodge structure of K3 type $T$
and fix a generator $\sigma_0$ of $T^{2,0}$. 
Then the ring $$K(\sigma_0)\coloneqq{\rm End}_{\rm Hdg}(T)$$
of $\QQ$-linear endomorphisms $\varphi\colon T\to T$ of the Hodge structure 
$T$ is a field. The map $\varphi\mapsto \lambda$, with $\lambda$
determined by $\varphi(\sigma_0)=\lambda\sigma_0$, describes an embedding
$K(\sigma_0)\,\hookrightarrow\CC$ and according to \cite{Zarhin}, $K(\sigma_0)$ is either a RM or a CM field, i.e.\ $K(\sigma_0)$ is either a totally real field or a purely imaginary quadratic
extension $K^0(\sigma_0)\subset K(\sigma_0)$ of a totally real field $K^0(\sigma_0)$.

The Hodge structure $T$ is said to have CM if $K(\sigma_0)$ is a CM field and, additionally, $T$ is of dimension one as a vector space over $K(\sigma_0)$.

\begin{remark}
For a brilliant Hodge structure $\sigma_t$ contained in the Noether--Lefschetz locus, projection
yields a $\QQ$-linear isomorphism $\varpi_t\colon T_t\congpf T$, see Lemma \ref{lem:orthprojHodge}. In particular, any Hodge endomorphism
$\varphi\in K(\sigma_0)$ naturally describes a $\QQ$-linear endomorphism $\varphi_t$
of $T_t$. Observe that after linear extension, the projection $\varpi_t$ maps
$\sigma_t=a\sigma_0+b\bar\sigma_0+c\ell$ to $a\sigma_0+b\bar\sigma_0$.
Thus,  if $\varphi(\sigma_0)=\lambda\sigma_0$, then
$\varphi(\varpi_t(\sigma_t))=\varphi(a\sigma_0+b\bar\sigma_0)=a\lambda\sigma_0+b\bar\lambda\bar\sigma_0$ and, therefore, $\varphi_t(\sigma_t)=\lambda\sigma_t$ for any $\varphi$ with $\lambda\in \RR$. Hence, in this case the $\QQ$-linear map
$\varphi_t\colon T_t\to T_t$ respects the $(2,0)$-part. However, 
for $d\ne 0$ the map $\varphi_t$ is not an endomorphism of the Hodge structure $T_t$, as it does not respect the space of $(1,1)$-forms. For $d=0$, $\varphi_t$ is a Hodge endomorphism, which also follows from Lemma \ref{lem:orthprojHodge}.
\end{remark}

In spite of there not being an obvious way of associating to a Hodge endomorphism
$\varphi$ of $T$ a Hodge endomorphism $\varphi_t$ of $T_t$, this is possible whenever
$T$ has CM and $\varphi$ is contained in $K^0(\sigma_0)$. The proof of
\cite[Prop.\ 3.8]{HuyCM}, dealing with the case $d=(\ell.\ell)>0$,
carries over literally to the case $d< 0$ and yields the following.

\begin{prop}\label{prop:CMBrilliant}
 Assume $T$ is an irreducible Hodge structure of K3 type with complex multiplication. Then, for any brilliant deformation $\sigma_t$ of $\sigma_0$ as a Hodge structure on $T\oplus\QQ\cdot \ell$ that is contained in the Noether--Lefschetz
locus ${\rm NL}_\ell$, there exists a natural embedding
$$K^0(\sigma_0)\,\hookrightarrow K(\sigma_t)={\rm End}_{\rm Hdg}(T_t).$$
\end{prop}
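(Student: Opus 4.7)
\emph{Plan.}
I would adapt the argument of \cite[Prop.~3.8]{HuyCM}, which handles the case $d=(\ell.\ell)>0$. Since the key algebraic manipulations are insensitive to the sign of $d$, the same proof transfers verbatim to the Dwork case $d<0$; in the Brauer case $d=0$ the statement is essentially immediate from Lemma \ref{lem:orthprojHodge}, which asserts that orthogonal projection $\varpi_t\colon T_t\to T$ is itself an isomorphism of Hodge structures up to complex conjugation, so that the entire endomorphism field $K(\sigma_0)$, and in particular $K^0(\sigma_0)$, embeds naturally into $K(\sigma_t)$.

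For fixed $\psi'\in K^0(\sigma_0)$, the totally real condition yields $\psi'(\sigma_0)=\lambda\sigma_0$ and $\psi'(\bar\sigma_0)=\lambda\bar\sigma_0$ with $\lambda\in\RR$, so $\psi'$ acts as the scalar $\lambda$ on the whole plane $P_{\sigma_0}\otimes\CC=\CC\sigma_0\oplus\CC\bar\sigma_0$. By Lemma \ref{lem:orthprojHodge} combined with non-degeneracy of the form on $T$, one can write $T_t=\{\alpha+(\alpha,B_0)\ell:\alpha\in T\}$ for some $B_0\in T$; the projection $\varpi_t(\sigma_t)=\alpha^*\in P_{\sigma_0}\otimes\CC$ then lies in the $\lambda$-eigenspace of $\psi'$.

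The corresponding $\psi\in K(\sigma_t)$ is built as a $\QQ$-linear modification of the naive transfer $\varpi_t^{-1}\psi'\varpi_t$. The naive transfer preserves $\CC\sigma_t$ with eigenvalue $\lambda$ but is in general not self-adjoint with respect to the form on $T_t$ inherited from $T\oplus\QQ\cdot\ell$, which pulls back along $\varpi_t$ to $(\alpha,\beta)+d(\alpha,B_0)(\beta,B_0)$ on $T$; the failure of self-adjointness is a rank-one term controlled by $B_0$ and $\psi'(B_0)$, and is cancelled by adding a suitable rank-one correction. The CM hypothesis enters decisively here: because $T$ is one-dimensional over the CM field $K$, the vectors $B_0$ and $\psi'(B_0)$ span a distinguished $K^0$-invariant subspace, and the correction can be chosen inside this subspace so that the eigenvalue on $\CC\alpha^*$ remains $\lambda$ (using critically that $\bar\lambda=\lambda$).

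The main obstacle is exactly this simultaneous compatibility: producing a $\QQ$-linear endomorphism of $T_t$ that preserves $\CC\sigma_t$ with eigenvalue $\lambda$ and is self-adjoint with respect to the inherited bilinear form, hence a Hodge endomorphism. Once such $\psi$ is in hand, the assignment $\psi'\mapsto\psi$ is readily checked to be a ring homomorphism, and injectivity is automatic since $K^0(\sigma_0)$ is a field.
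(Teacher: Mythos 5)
Your proposal follows essentially the same route as the paper: the case $d>0$ is \cite[Prop.~3.8]{HuyCM}, the case $d<0$ is obtained by observing that the argument there only uses $m=(\ell.\ell')\neq 0$ and is insensitive to the sign change of $(\ell'.\ell')$, and the case $d=0$ is immediate from Lemma \ref{lem:orthprojHodge}. Your additional sketch of the internal mechanics of the cited argument (naive transfer of $\psi'\in K^0(\sigma_0)$ plus a CM-dependent correction restoring compatibility with the form on $T_t$) correctly identifies where the CM hypothesis and the reality of $\lambda$ enter, and is consistent with the paper's (even terser) proof.
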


\begin{proof}
Only the case $d<0$ needs to be checked. For a brilliant deformation $\sigma_t$
in ${\rm NL}_\ell$
there exists a class $\ell'\in T\oplus \QQ\cdot\ell$ with $m\coloneqq (\ell.\ell')\ne0$
and $T_t=\ell'^\perp$, which  is of signature $(2,r-2)$. Note that
unlike the case considered in \cite[\S 3]{HuyCM}, we have $(\ell'.\ell')<0$, but this
sign difference is of no consequence in the argument there, only
$m\ne0$ matters.
\end{proof}

The result immediately implies Theorem \ref{thm:mainRM}, see \S \ref{sec:K3Transl} for the translation.

\begin{cor}\label{cor:prop:CMBrilliant}
Under the assumption of the previous proposition, if $\sigma_0$ has complex multiplication, then also any brilliant deformation  $\sigma_t$ of $\sigma_0$ contained
in the Noether--Lefschetz locus has complex multiplication and the maximal
totally real subfields of $\sigma_0$ and $\sigma_t$ coincide.
\end{cor}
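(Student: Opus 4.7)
The plan is to combine Proposition \ref{prop:CMBrilliant} with the classification (due to Zarhin) of endomorphism algebras of irreducible Hodge structures of K3 type. Set $n \coloneqq \dim_\QQ T$; since $\sigma_0$ has CM, $K(\sigma_0)$ is a CM field of degree $n$ over $\QQ$ and its maximal totally real subfield $K^0(\sigma_0)$ has degree $n/2$. By Lemma \ref{lem:orthprojHodge}, orthogonal projection defines a $\QQ$-linear bijection $T_t \cong T$, so $\dim_\QQ T_t = n$ as well.

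Proposition \ref{prop:CMBrilliant} provides an embedding $K^0(\sigma_0) \hookrightarrow K(\sigma_t) = \End_{\rm Hdg}(T_t)$. The transcendental lattice $T_t$ is irreducible as a Hodge structure (by the minimality built into its definition together with the irreducibility of $T$), so by Zarhin's theorem $K(\sigma_t)$ is either a CM field or a totally real field, and $T_t$ is a vector space over $K(\sigma_t)$. Hence $[K(\sigma_t):\QQ]$ divides $n$, while $K^0(\sigma_0) \subset K(\sigma_t)$ forces $[K(\sigma_t):\QQ] \geq n/2$. Thus $[K(\sigma_t):\QQ] \in \{n/2,\, n\}$.

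The main obstacle is to rule out the case $[K(\sigma_t):\QQ] = n/2$, which would give $K(\sigma_t) = K^0(\sigma_0)$ totally real with $\dim_{K(\sigma_t)} T_t = 2$. Here I would invoke Zarhin's refinement that an irreducible K3 type Hodge structure with totally real endomorphism field $K$ satisfies $\dim_K T \geq 3$; this contradicts the above, so $[K(\sigma_t):\QQ] = n$ and $\dim_{K(\sigma_t)} T_t = 1$. The same bound rules out $K(\sigma_t)$ being totally real, hence $K(\sigma_t)$ is CM and $\sigma_t$ has CM. For the final assertion, the maximal totally real subfield $K^0(\sigma_t) \subset K(\sigma_t)$ has degree $n/2$; since $K^0(\sigma_0)$ is totally real and embedded in $K(\sigma_t)$, it sits in $K^0(\sigma_t)$, and equality follows by comparing degrees.
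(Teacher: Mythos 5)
Your argument is correct and is essentially the paper's proof: both rest on the embedding $K^0(\sigma_0)\hookrightarrow K(\sigma_t)$ from Proposition \ref{prop:CMBrilliant} together with the fact that $T_t$ has dimension two over the totally real field $K^0(\sigma_0)$, which forces $T_t$ to be of CM type with $K^0(\sigma_0)$ as maximal totally real subfield. The paper simply cites \cite[Lem.\ 3.2]{vGeemen} for this last step, whereas you unpack it via the equivalent bound that an irreducible K3-type Hodge structure whose full endomorphism field is totally real must have dimension at least three over it; the two formulations are interchangeable given Zarhin's dichotomy.
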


\begin{proof}
For dimension reasons, $T_t$ as a vector space over $K^0(\sigma_0)$
is of dimension two. Hence, according to \cite[Lem.\ 3.2]{vGeemen}, $T_t$ has complex multiplication and $K^0(\sigma_0)$ is its maximally totally real subfield.
\end{proof}

\begin{remark}
Note that the  CM fields $K(\sigma_0)$ and $K(\sigma_t)$ will typically be different.
In \cite[Cor.\ 3.10]{HuyCM} an equation of degree two 
$X^2+\gamma X+\delta=0$ is given that describes the extension
$K^0(\sigma_0)\cong K^0(\sigma_t)\subset K(\sigma_t)$. Unfortunately,  there is a factor $2$ missing
in the computation of $\delta$, which really should be $\delta=m^2-(1/2)d(\sigma_0.\bar\sigma_0)^{-1}(\alpha^2+\alpha^{-2}-2)$.\footnote{Thanks
to F.\ Vigan\`o \cite{Vigano} for pointing this out to me.} The same computation then holds for
any brilliant deformation in the Noether--Lefschetz locus as long as $d\ne0$. Note that
for $d=0$ we have $K(\sigma_0)\cong K(\sigma_t)$ for all $t$ in the Noether--Lefschetz locus.
\end{remark}


\section{Geometric brilliant families} 
In this section we translate the Hodge theory of the previous section
into geometric statements. We also indicate how to think of the Hodge theoretic 
Brauer family for K3 surfaces that are not elliptic.

\subsection{}\label{sec:K3Transl} We shall first explain how to reduce the geometric situation considered in the introduction and in \S  \ref{sec:Geom} to the purely Hodge theoretic context in  \S \ref{sec:brilliantHodge}. The arguments are straightforward and we will be brief.

Assume $S$ is a complex projective K3 surface with transcendental lattice $T(S)\coloneqq \NS(S)^\perp\subset H^2(S,\ZZ)$. Then  the real Hodge structure $T_\RR\coloneqq T(S)\otimes \RR$ and the rational Hodge structure $T\coloneqq T(S)\otimes\QQ$ satisfy the assumptions in \S \ref{sec:brilliant} and \S \ref{sec:RatHS}.
In particular, $T_\QQ$ is an irreducible Hodge structure. The generator $\sigma_0\in T^{2,0}$ is nothing but a non-zero holomorphic $(2,0)$-form on $S$.
For the additional class $\ell$ in (\ref{eqn:TRplusell}) and (\ref{eqn:TQplusell}) one picks
a class $\ell\in H^{1,1}(S,\ZZ)$. 

The three cases $(\ell.\ell)>0$, $=0$, and $<0$ are geometrically realized by
the constructions in \S \ref{sec:Dwork}, \S \ref{sec:GeomBrauer}, and \S \ref{ref:Twist}.

The surjectivity of the period map, see \cite[Ch.\ 7]{HuyK3} for the statement and references, immediately yields the following.
\begin{prop}
Consider the period domain $D_\ell$ in (\ref{eqn:Dell}). Then there exists
a smooth family of K3 surfaces $\ks\to D_\ell$ such that the fibre over
$\sigma_0$ is $S$ and parallel transport identifies $H^{2,0}(\ks_t)$ with 
the Hodge structure on $T\oplus \QQ\cdot\ell$ corresponding to $\sigma_t$.\qed
\end{prop}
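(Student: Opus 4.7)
The plan is to invoke the surjectivity of the period map for K3 surfaces, together with local Torelli, to realize the Hodge-theoretic family $D_\ell$ by a genuine holomorphic family. Throughout I would restrict to the connected component of $D_\ell$ containing $\sigma_0$, which by Lemma \ref{lem:PeriodDomain} is a simply connected complex manifold isomorphic to one of $\PP^1$, $\CC$, or $\HH$.

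The first step is to view $D_\ell$ as a closed analytic subvariety of the full K3 period domain $D \subset \PP(H^2(S,\CC))$ attached to the lattice $\Lambda = H^2(S,\ZZ)$: every $\sigma \in D_\ell$ lies on the quadric defined by $(\sigma.\sigma)=0$ with $(\sigma.\bar\sigma)>0$, and these conditions hold equally well in $H^2(S,\CC)$. The second step is the local construction of the family. For each $\sigma_t \in D_\ell$, surjectivity of the global period map (Todorov--Siu, cf.\ \cite[Ch.\ 7]{HuyK3}) produces a marked K3 surface with period $\sigma_t$, and local Torelli identifies its Kuranishi family with an open neighborhood of $\sigma_t$ in $D$. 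Restricting this identification to $D_\ell$ yields a local smooth family of marked K3 surfaces over a neighborhood of $\sigma_t$ in $D_\ell$.

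The third step is to glue these local pieces into a single family $\ks \to D_\ell$. I would fix the tautological marking of $S$ realizing $\sigma_0$ as its period and transport it along paths in $D_\ell$; because $D_\ell$ is simply connected, this gives a consistent marking on every Kuranishi neighborhood, so that the local families patch together into a single smooth family with $\ks_{\sigma_0} = S$. The statement on parallel transport is then tautological: by construction the flat class defining $H^{2,0}(\ks_t)$ is the point $\sigma_t$ of $D_\ell \subset \PP(\CC\sigma_0 \oplus \CC\bar\sigma_0 \oplus \CC\ell)$.

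The main technical obstacle is the gluing step: the na\"\i ve marked moduli space of K3 surfaces is non-Hausdorff, with inseparability controlled by reflections in $(-2)$-classes, so one cannot simply pull back a universal family from $D$. Simple connectedness of $D_\ell$ is exactly what allows us to single out a well-defined branch of the inverse period map along the family. A cleaner alternative would be to work with $M$-polarized K3 surfaces, where $M$ is the saturation of $\ZZ\ell + (\NS(S) \cap \ell^\perp)$ in $H^2(S,\ZZ)$: then $D_\ell$ sits inside the corresponding period domain $D_M$, and a universal family over a finite cover descends to the desired family over $D_\ell$.
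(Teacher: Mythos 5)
Your proposal follows essentially the same route as the paper: the paper's entire proof is the single sentence preceding the proposition, ``The surjectivity of the period map \ldots immediately yields the following,'' and your expansion via local Torelli, Kuranishi families, and gluing over the simply connected component of $D_\ell$ is exactly the argument being invoked. One caveat on your proposed ``cleaner alternative'': taking $M$ to be the saturation of $\ZZ\ell+(\NS(S)\cap\ell^\perp)$ is incorrect for $(\ell.\ell)\neq 0$, since then $(\sigma_t.\ell)\neq 0$ for general $\sigma_t\in D_\ell$, so $\ell$ does not stay of type $(1,1)$ along the family; moreover, in the twistor case $(\ell.\ell)>0$ the lattice $\NS(S)\cap\ell^\perp$ is negative definite and the general fibre is non-projective, so the standard (pseudo-ample) lattice-polarized formalism does not apply. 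The main argument does not depend on this alternative, so this does not affect the correctness of your proof.
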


For an ample class $\ell$ the family can alternatively be viewed as the twistor family
in \S \ref{ref:Twist}. If $\ell$ is the class of the fibre of an elliptic fibration of $S$, 
the family can be constructed geometrically and without using the surjectivity of the period map, see \S \ref{sec:GeomBrauer}. However, for $(\ell.\ell)<0$ there typically is no alternative geometric construction. Moreover, in this case, unless $\rho(S)=20$, the family
$\ks\to D_\ell$ will not descend to an algebraic family despite all its fibres being
polarized by any ample class in $\ell^\perp\cap H^{1,1}(S,\ZZ)$, see Remark \ref{rem:AO} and the discussion at the end of \S \ref{sec:Dwork}.

\begin{remark}
For $(\ell.\ell)\leq 0$, the family $\ks\to D_\ell$ is a brilliant deformation of $S$, i.e.\
$H^{2,0}(\ks_t)\subset (H^{2,0}\oplus H^{0,2})(S)\oplus\CC\cdot\ell$ with $\ell\not\in
(H^{2,0}\oplus H^{0,2})(\ks_t)$. For $(\ell.\ell)>0$, the last condition forces us to restrict
to the union of the two hemispheres $D_\ell\setminus\ S^1_\ell$, which we shall henceforth do.

Note that by definition $t\in {\rm NL}_\ell$ if and only of $\ks_t$ is algebraic with $\rho(\ks_t)=\rho(S)$, which justifies to call ${\rm NL}_\ell$ the Noether--Lefschetz locus
of the brilliant family.
\end{remark}

\smallskip

\noindent
{\it Proof of Theorem \ref{thm:mainRM}}
Apply Proposition \ref{prop:CMBrilliant} and Corollary \ref{cor:prop:CMBrilliant}.
\qed
\smallskip

Let us now focus on the case $(\ell.\ell)=0$. As remarked already in \S 
\ref{sec:GeomBrauer}, the two Brauer groups ${\rm Br}(S)$ and ${\rm Br}(T(S))$
differ by a finite group. More precisely, there is a surjection ${\rm Br}^{\rm an}(T(S))\twoheadrightarrow {\rm Br}^{\rm an}(S)$ with a finite kernel and inducing a surjection
between the algebraic Brauer groups.  However, as we rather work with the brilliant family over $H^{2,0}(S)\cong T^{2,0}\cong \CC$, the difference
is of no importance to us. If $\ell$ is the class $f$ of an elliptic fibration $S\to \PP^1$, then the Brauer family $\ks\to \CC$, see (\ref{eqn:BrauerFam}), is the geometric realization of the Hodge theoretic Brauer family. This yields

\begin{cor}\label{cor:EllBrauer}
Let $S$ be a complex projective elliptic K3 surface. Then the
Brauer family $\ks\to \CC\cong H^{2,0}(S)$ is a brilliant deformation of $S$
such that its fibre $\ks_t$ is isomorphic to the K3 surface $S_\alpha$, where
$\alpha$ is the image of $t$ under $H^{2,0}(S)\twoheadrightarrow {\rm Br}^{\rm an}(S)\cong\Sha^{\rm an}(S)$. Furthermore,
$\ks_t$ is algebraic if and only if $t\in T(S)\otimes\QQ$ or, equivalently, if $\alpha\in {\rm Br}(S)$.\qed
\end{cor}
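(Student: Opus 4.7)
The plan is to identify the geometric Brauer family constructed in \S \ref{sec:GeomBrauer} with the Hodge theoretic brilliant family over $L_\ell \cong T^{2,0} \cong \CC$ studied in \S \ref{sec:HodgeBrauer}, setting $\ell = f$ equal to the fibre class of the elliptic fibration $S \to \PP^1$. First I would recall from \cite[Ch.~1.5]{FM} (quoted in \S \ref{sec:GeomBrauer}) that the fibre of $\ks\to H^{0,2}(S)$ over $t$ is, by its very construction, isomorphic to the analytic Tate--{\v{S}}afarevi{\v{c}} element $(S',\psi)$ that is the image of $t$ under the composite
\[
H^{0,2}(S) \twoheadrightarrow {\rm Br}^{\rm an}(S) \cong \Sha^{\rm an}(S).
\]
This gives $\ks_t \cong S_\alpha$ and reduces the remaining claims to Hodge theory.

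Next I would verify that the family is brilliant with $\ell = f$. Since $(f.f)=0$ the class $f$ is isotropic, so we are in the $d=0$ case of \S \ref{sec:HodgeBrauer}. Using the formula for the period of a twisted K3 surface $(S,\alpha)$ recalled in the second remark of \S \ref{sec:GeomBrauer} (lifting $\alpha$ to $B \in H^2(S,\QQ)$ and taking $\sigma_0 + \sigma_0 \wedge B$), together with the compatibility of the Fourier--Mukai partner $S_\alpha \cong (S,\alpha)$ with parallel transport along the Brauer family, the period of $\ks_t$ takes the shape $\sigma_t = \sigma_0 + (\sigma_0.B)f$ for some $B \in T(S)\otimes \CC$. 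In particular $H^{2,0}(\ks_t) \subset H^{2,0}(S) \oplus \CC\cdot f$, and $f$ is of type $(1,1)$ throughout, so $f \notin H^{2,0}(\ks_t) \oplus H^{0,2}(\ks_t)$. This is the defining property of a brilliant deformation, and under the identification (\ref{eqn:LlT02}) the base $H^{2,0}(S)$ gets identified canonically with $L_f = L_\ell$.

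For the algebraicity statement I would invoke Lemma \ref{lem:NLBrauer}, which in the $d=0$ case identifies the Noether--Lefschetz locus ${\rm NL}_\ell \subset L_\ell \cong T^{2,0}$ with the rational subspace $T = T(S)\otimes\QQ$ (embedded via orthogonal projection into $T^{2,0}$). Combining this with the remark at the end of \S \ref{sec:K3Transl} that $t \in {\rm NL}_\ell$ iff $\ks_t$ is algebraic with $\rho(\ks_t) = \rho(S)$, and with the standard fact (\S \ref{sec:GeomBrauer}) that $\alpha \in {\rm Br}^{\rm an}(S)$ lies in the algebraic subgroup ${\rm Br}(S)$ precisely when it is torsion, yields the chain of equivalences
\[
\ks_t \text{ algebraic} \;\Longleftrightarrow\; t \in T(S)\otimes\QQ \;\Longleftrightarrow\; \alpha \in {\rm Br}(S).
\]
The main obstacle, essentially bookkeeping, is to ensure that the period computation for the Brauer family truly reproduces $\sigma_0 + (\sigma_0.B)f$ with $B$ representing $\alpha$ (rather than merely $\sigma_0 + t f$ with an unrelated parameter $t$); once this identification is pinned down via the Fourier--Mukai/$B$-field description of $T(S,\alpha)$ recalled in \S \ref{sec:GeomBrauer}, the rest follows directly from Lemma \ref{lem:NLBrauer}.
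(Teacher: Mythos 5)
Your proposal is correct and follows essentially the same route as the paper, which presents the corollary as an immediate consequence of identifying the geometric Brauer family of \S\ref{sec:GeomBrauer} with the Hodge-theoretic brilliant family over $L_f\cong T^{0,2}$ and then applying Lemma \ref{lem:NLBrauer}. The only cosmetic point: the reason $f\notin H^{2,0}(\ks_t)\oplus H^{0,2}(\ks_t)$ is that $P_{\sigma_t}$ is positive definite while $(f.f)=0$, not merely that $f$ is of type $(1,1)$, but this does not affect the argument.
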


In other words, the fibre $\ks_t$ is algebraic if and only if after parallel transport $H^{2,0}(\ks_t)$ is generated by $\sigma_0+(\sigma_0.B)f$ for some rational cohomology class $B\in H^2(S,\QQ)$.

\smallskip
\noindent
{\it Proof of Theorem \ref{thm:NLvsBr}}
In order to apply the Hodge theory in \S \ref{sec:TwBrHs}, we pick an ample line bundle $L$ on
$S$ and let $\ell_1\coloneqq m\cdot {\rm c}_1(L)$ with $m\coloneqq (f.{\rm c}_1(L))/ ({\rm c}_1(L).{\rm c}_1(L))$. Then $\ell_2\coloneqq f-\ell_1$ satisfies the assumption $(\ell_2.\ell_2)=-(\ell_1.\ell_1)$.

Then the first assertion in Theorem  \ref{thm:NLvsBr} is the observation  in Remark \ref{rem:equatorflow}, while the second follows from Proposition \ref{prop:NL}
applied to $\ell=\ell_1$.
\qed
\smallskip

\smallskip
\noindent
{\it Proof of Theorem \ref{thm:comp}}
The result is a direct consequence of Corollary \ref{cor:HT04}. The class $\ell'$ there satisfies $(\ell'.\ell')>0$. Thus, it is a $(1,1)$-class
in the positive cone of all fibres of $\ks\to D_{\ell_1,\ell_2}\cap\ell'^\perp$,
which therefore are all algebraic
\qed
\subsection{}\label{sec:higherDimMS}

In order to realize the Hodge theoretic description of the Brauer family in  \S \ref{sec:HodgeBrauer} geometrically, we need to give the additional isotropic
class $\ell$ a geometric meaning. In \S \ref{sec:GeomBrauer} and \S\ref{sec:K3Transl}
this was achieved for elliptic K3 surfaces $S\to\PP^1$ by letting $\ell$ be the fibre
class $f$. Of course, a general K3 surface is not elliptic, i.e.\
there is no isotropic $(1,1)$-class in $H^2(S,\ZZ)$. However, there is an easy way out by
passing to higher-dimensional hyperk\"ahler manifolds endowed with a Lagrangian
fibration.
To this end, we first note the following standard fact.

\begin{prop}
Let $S$ be a projective K3 surface. Then there exists a projective hyperk\"ahler manifold
$X$ together with a Lagrangian fibration $X\to \PP^n$ with a section and
an algebraic  Hodge isometry $T(S)\cong T(X)$.
\end{prop}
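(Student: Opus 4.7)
\medskip
\noindent\emph{Proof plan.}
The natural candidate for $X$ is a Beauville--Mukai Lagrangian fibration, realizing $X$ as a moduli space of pure one-dimensional sheaves on $S$. First I would fix a primitive ample class $H\in\NS(S)$ with $H^2=2n-2$, so that smooth curves in the linear system $|H|\cong\PP^n$ have arithmetic genus $n$, and then take $X\coloneqq M_H(v)$, the moduli space of $H$-Gieseker stable sheaves on $S$ with Mukai vector $v=(0,H,1-n)\in\widetilde H(S,\ZZ)$. By the classical results of Mukai, O'Grady, and Yoshioka, primitivity of $v$ together with $v$-genericity of $H$ guarantee that $X$ is a smooth projective irreducible hyperk\"ahler manifold of dimension $v^2+2=H^2+2=2n$.

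The support morphism, sending a pure one-dimensional sheaf to its Fitting support, then yields the Beauville--Mukai Lagrangian fibration $X\to|H|\cong\PP^n$. To produce a section I would use the tautological assignment $C\mapsto\ko_C$: for every integral $C\in|H|$ the sheaf $\ko_C$ is $H$-stable with Mukai vector $(0,H,1-n)$, so this defines a morphism from the open locus of integral curves into $X$. Properness of $X\to|H|$ together with the standard analysis of limits of structure sheaves in the compactified relative Jacobian then extends this to a regular section over all of $|H|$.

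For the Hodge isometry, the Mukai morphism of O'Grady--Yoshioka gives a natural isometry of integral Hodge structures $\theta_v\colon v^\perp\congpf H^2(X,\ZZ)$. Since $v$ lies in the algebraic summand $(H^0\oplus\NS(S)\oplus H^4)(S,\ZZ)$ of $\widetilde H(S,\ZZ)$, the transcendental lattice $T(S)\subset H^2(S,\ZZ)$ is contained in $v^\perp$ and coincides with its transcendental part. Restricting $\theta_v$ therefore produces the desired Hodge isometry $T(S)\congpf T(X)$, and it is algebraic because it is induced by the Chern character of a (possibly $\alpha$-twisted) universal sheaf $\kp$ on $S\times X$; the twist disappears after passing to a sufficiently high tensor power $\kp^{\otimes r}$, exactly as in the discussion of the Brauer family in \S \ref{sec:GeomBrauer}.

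The one step I expect to require the most care is extending the tautological section $C\mapsto\ko_C$ across the locus of non-integral curves in $|H|$, where $\ko_C$ need not be $H$-stable and must be replaced by a canonical stable degeneration inside $M_H(v)$; alternatively one could invoke Markman's existence results for sections of Beauville--Mukai systems with suitably chosen Mukai vector. The remaining ingredients are essentially bookkeeping inside the Mukai lattice.
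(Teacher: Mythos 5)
Your proposal is correct and follows essentially the same route as the paper: $X$ is the Beauville--Mukai system $M_H(0,H,\chi)$, the support map gives the Lagrangian fibration, a tautological family of sheaves on the curves gives the section, and the O'Grady isometry $v^\perp\cong H^2(X,\ZZ)$ restricted to transcendental parts gives the algebraic Hodge isometry. The only cosmetic difference is the normalization $\chi=1-n$ with section $C\mapsto\ko_C$ instead of the paper's $\chi=(L.L)/2$ with section $C\mapsto L|_C$; these are identified by tensoring with $L$, and the stability check for non-integral members of $|H|$ that you rightly flag is the same in both versions.
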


\begin{proof} Consider  a primitive ample line bundle $L$ on $S$
and a Mukai vector $v\in H^*(S,\ZZ)$ of the form $v=(0,{\rm c}_1(L),\chi)$.
Then the moduli space  $X$ of sheaves $E$ with
$v(E)=v$ that are Gieseker stable with respect to a $v$-generic polarization on $S$
is a projective hyperk\"ahler manifold, see \cite[Prop.\ 10.2.5]{HuyK3} for the statement and references.
Furthermore, there exists a Hodge isometry $H^2(X,\ZZ)\cong v^\perp\subset \widetilde H(S,\ZZ)$, where the left hand side is endowed with the Beauville--Bogomolov form and the right
hand side is the orthogonal complement of $v$ in the Mukai lattice, see \cite{OG}.
Moreover, the isomorphism is algebraic, i.e.\ it is induced by an algebraic class on the product $S\times X$, and, thus, yields an algebraic Hodge isometry between the transcendental lattices $T(S)\cong T(X)$. Finally, mapping a sheaf to its Fitting support 
yields a Lagrangian fibration $X\to \PP^n\cong |L|$, which for $\chi=(L.L)/2$ comes with a section given by the sheaves $L|_C$ for $C\in |L|$. 

Alternatively, one could work with any primitive $v$ with a square zero $(1,1)$-class in $v^\perp$ and use \cite[Thm.\ 1.5]{BayerMacri}
to produce a birational Lagrangian fibration.
%
\end{proof}

Clearly, there is no canonical choice for $X$, in fact not even for its dimension. But note that this happens already for the case of elliptic K3 surfaces considered before, as frequently K3 surfaces admit more than one elliptic fibration.

The analogue of Corollary \ref{cor:EllBrauer} for elliptic K3 surfaces is then the following.

\begin{cor}
Let $S$ be a projective K3 surface with $(H^{2,0}\oplus H^{0,2})(S)\cap H^2(S,\QQ)=0$.
  
{\rm (i)} There exists a compact Lagrangian
hyperk\"ahler manifold $\pi\colon X\to \PP^n$ together with an algebraic Hodge isometry
$T(S)\cong T(X)$ and a brilliant deformation  $\kx\to \CC$ of $X$ of Brauer type that is
endowed with a family of Lagrangian fibrations $\kx_t\to\PP^n$.

{\rm (ii)} If $\alpha_t\in {\rm Br}(X)$ is the image of a point $t\in \CC$ under the natural projection $H^{0,2}(S)\cong\CC\twoheadrightarrow {\rm Br}^{\rm an}(S)\cong {\rm Br}^{\rm an}(X)$,
then the Lagrangian fibration $\pi_t\colon\kx_t\to\PP^n$ corresponds to the
Tate--{\v{S}}afarevi{\v{c}} twist of $X\to \PP^n$ associated to $\alpha_t$.

{\rm (iii)}
The hyperk\"ahler manifold $\kx_t$ is projective if and only if $t$ is contained in the Noether--Lefschetz locus or, equivalently, if its image $\alpha_t$ is contained in the algebraic
Brauer group ${\rm Br}(X)\subset {\rm Br}^{\rm an}(X)$.
\end{cor}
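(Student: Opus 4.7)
The plan is to combine the previous proposition with the Hodge-theoretic Brauer family of \S\ref{sec:HodgeBrauer} and to invoke surjectivity of the period map for hyperk\"ahler manifolds.

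For (i), I start with the projective hyperk\"ahler manifold $X$ provided by the previous proposition, equipped with the Lagrangian fibration $\pi\colon X\to \PP^n$ and an algebraic Hodge isometry $T(S)\cong T(X)$. The fibre class $f\coloneqq \pi^\ast H\in H^{1,1}(X,\ZZ)$ is isotropic for the Beauville--Bogomolov form, so the Hodge theory of \S\ref{sec:HodgeBrauer} applies with $T=T(X)\otimes\QQ$ and $\ell=f$, producing the period family $\sigma_t\in L_f\cong T^{0,2}\cong \CC$. Surjectivity of the period map on the connected component of the marked moduli space of deformations of $X$ containing $(X,\id)$ yields a smooth holomorphic family $\kx\to \CC$ whose fibre $\kx_t$ has period $\sigma_t$. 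Because $f$ remains of type $(1,1)$ along the family, a standard deformation-theoretic argument in the spirit of Matsushita's theorem on deformations of Lagrangian fibrations produces the desired family of fibrations $\pi_t\colon \kx_t\to\PP^n$, giving (i).

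For (ii), the identification of $\pi_t$ with the Tate--{\v{S}}afarevi{\v{c}} twist of $\pi$ by $\alpha_t$ proceeds in direct analogy with the K3 case of \S\ref{sec:GeomBrauer}. Both the Brauer family $\kx\to \CC$ and the family of Tate--{\v{S}}afarevi{\v{c}} twists of $X\to \PP^n$ are parameterized naturally by ${\rm Br}^{\rm an}(X)\cong H^{0,2}(X)/T(X)_\ZZ$, the latter via the higher-dimensional Tate--{\v{S}}afarevi{\v{c}} framework of \cite{Mark}. A comparison of the $B$-field description of twisted periods with the parallel transport along $\kx\to \CC$ shows that both constructions yield the same Hodge structure on $H^2(\kx_t,\QQ)$, and the global Torelli theorem for hyperk\"ahler manifolds then yields the required isomorphism of Lagrangian fibrations. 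For (iii), Lemma \ref{lem:NLBrauer} identifies ${\rm NL}_f\subset L_f$ with the $\QQ$-subspace $T\subset T^{0,2}$, whose image in ${\rm Br}^{\rm an}(X)=T^{0,2}/T_\ZZ$ is precisely the torsion subgroup ${\rm Br}(X)$. When $t\in {\rm NL}_f$ the fibre $\kx_t$ is the Tate--{\v{S}}afarevi{\v{c}} twist of $X$ by an algebraic Brauer class and is therefore projective. Conversely, for $t\notin {\rm NL}_f$, parallel transport combined with the identity $(\sigma_t.\beta)=(\sigma_0.\beta)+(\sigma_0.B^{0,2})(f.\beta)$ shows $\NS(\kx_t)=\NS(X)\cap f^\perp$, which is negative semidefinite with one-dimensional radical spanned by $f$; hence $\kx_t$ carries no $(1,1)$-class of positive Beauville--Bogomolov square and fails to be projective by the standard projectivity criterion for hyperk\"ahler manifolds.

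The principal obstacle is step (ii). For higher-dimensional Lagrangian hyperk\"ahler manifolds, the analytic Tate--{\v{S}}afarevi{\v{c}} theory and its identification with ${\rm Br}^{\rm an}(X)$ are more delicate than in the K3 setting, and one has to verify carefully that the period of the twist associated to $\alpha_t$ agrees with the period $\sigma_t$ produced by the Hodge-theoretic construction. This may require invoking birational Torelli results and controlling the possible birational models of $\kx_t$ realizing the given period.
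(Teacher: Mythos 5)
Your overall architecture inverts the one the paper uses, and this inversion is where the real gap sits. The paper does not obtain $\kx\to\CC$ from surjectivity of the period map: it constructs the family directly as the Tate--{\v{S}}afarevi{\v{c}} family of the Lagrangian fibration $X\to\PP^n$, citing \cite[\S\S 7.1--7.2]{Mark} (the higher-dimensional analogue of the elliptic-surface construction in \cite[Ch.\ 1.5]{FM}). With that construction, each fibre $\kx_t$ comes equipped with its Lagrangian fibration and \emph{is} the twist by $\alpha_t$ by definition, so (i) and (ii) require no further argument; only (iii) needs proof, via the projectivity criterion \cite[Thm.\ 3.11]{HuyInv}. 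Your route instead builds the family abstractly from the periods $\sigma_t\in L_f$ and then tries to recognize the fibres as twists a posteriori. This is problematic at two points. First, surjectivity of the period map for hyperk\"ahler manifolds produces individual marked manifolds realizing each period, not a holomorphic family over $\CC\cong L_f$; the marked moduli space is non-Hausdorff and the period map is only a local isomorphism, so assembling a global family over $L_f$ (and equipping every fibre with a Lagrangian fibration via ``a standard deformation-theoretic argument'') is precisely the content of Markman's construction, not something you may assume. Second, you yourself flag that identifying $\kx_t$ with the twist by $\alpha_t$ requires controlling birational models via Torelli-type results; that is an acknowledged hole rather than a proof of (ii). The clean fix is to reverse the logic as the paper does. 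You also never use the hypothesis $(H^{2,0}\oplus H^{0,2})(S)\cap H^2(S,\QQ)=0$, which the paper needs to guarantee that all twists of $X\to\PP^n$ are again hyperk\"ahler.

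Your treatment of (iii) is essentially right and in one direction more explicit than the paper's: the computation showing that for $t\notin{\rm NL}_f$ every integral $(1,1)$-class $\beta$ on $\kx_t$ satisfies $(f.\beta)=0=(\sigma_0.\beta)$, so that $\NS(\kx_t)=\NS(X)\cap f^\perp$ is negative semidefinite and the projectivity criterion fails, is exactly the intended use of \cite[Thm.\ 3.11]{HuyInv}, combined with Lemma \ref{lem:NLBrauer} to identify ${\rm NL}_f$ with $T\subset T^{0,2}$. For the converse direction, however, ``the twist by an algebraic Brauer class is therefore projective'' is not automatic in higher dimensions; it is cleaner to argue symmetrically that for $t\in{\rm NL}_f$ there is a $(1,1)$-class of positive Beauville--Bogomolov square on $\kx_t$ (the class $\ell'$ produced as in Proposition \ref{prop:NL}) and apply the same criterion.
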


To spell this out a bit more, observe that $T(S)\cong T(X)$ yields a natural isomorphism 
(up to a finite kernel) between
the algebraic and  the analytic Brauer groups of $S$ and $X$. The statement that $\kx\to\CC$ is a brilliant deformation
of Brauer type means that there exists an isotropic class $f\in H^{1,1}(X,\ZZ)$,
namely  $\pi^\ast{\rm c}_1(\ko(1))$, 
such that $H^{2,0}(\kx_t)\subset H^{2,0}(X)\oplus H^{0,2}(X)\oplus\CC\cdot f$.
This allows one to use the identification $L_f\cong H^{0,2}(X)\cong H^{0,2}(S)$
in (\ref{eqn:LlT02}) and the surjection ${\rm NL}_f\twoheadrightarrow {\rm Br}(X)\cong {\rm Br}(S)$.

\begin{proof} The idea of the proof is similar to the one for elliptic K3 surfaces.
The existence of $X\to \PP^n$ is guaranteed by the previous proposition.
For the construction of the family $\kx\to \CC$ we refer to \cite[\S\S7.1-7.2]{Mark},
it is analogous to the one for elliptic K3 surfaces \cite[Ch.\ 1.5]{FM}. To prove (iii), one needs to relate projectivity of $\kx_t$ to the existence of $(1,1)$-classes of
positive Beauville--Bogomolov square, for which we use the criterion
\cite[Thm.\ 3.11]{HuyInv}.\end{proof}

 Ideally one would like to phrase (ii) in terms of an isomorphism ${\rm Br}^{\rm an}(X)\cong\Sha^{\rm an}(X)$ (and similar for the algebraic variants),
but the situation is more complicated in higher dimensions.
Also note that the assumption on $(H^{2,0}\oplus H^{0,2})(S)$ holds for most K3 surfaces and is probably not needed. It is used to ensure that all fibres in the Brauer family obtained by twisting
the Lagrangian fibration $X\to \PP^n$ are actually hyperk\"ahler.

\begin{remark}
The main result of \cite{Mark} roughly says that any Lagrangian fibration of a hyperk\"ahler manifold that is deformation equivalent to a Hilbert scheme of a K3 surface
is obtained as a fibre $\kx_t$ of a  family of the above type.
\end{remark}

\begin{remark}
The situation in positive characteristic shows certain features that are definitely not
mirrored by Hodge theory. For example, the construction of
`twistor spaces'  in \cite{BL} starts with a supersingular K3 surface $S$ together
with a fixed isotropic Mukai vector $v$ and interpretes the collection of moduli spaces
of stable sheaves  that are twisted with respect to the varying Brauer class $\alpha$
as a family of untwisted K3 surfaces. In characteristic zero, this does not work unless the Mukai vector $v=(r,\ell,s)$ satisfies $r=0$. Indeed, if an $\alpha$-twisted sheaf exists at all, then the order of $\alpha$ must divide $r$, which is impossible to ensure for varying Brauer classes.
\end{remark}


\end{document}